\providecommand{\U}[1]{\protect\rule{.1in}{.1in}}
\def\@seccntformat#1{\csname the#1\endcsname.\quad}
\newtheorem{theorem}{Theorem}
\newtheorem{corollary}{Corollary}
\newtheorem{definition}{\noindent Definition}
\newtheorem{example}{Example}
\newtheorem{lemma}{Lemma}
\newtheorem{proposition}{Proposition}
\newtheorem{remark}{Remark}
\renewenvironment{proof}[1][Proof]{\noindent\textbf{#1.} }{\ \rule{0.5em}{0.5em}}
\begin{document}

\title{{\Large \textbf{Scale Free Analysis and  the Prime Number Theorem}}}
\author{  Dhurjati Prasad Datta\thanks{%
Corresponding author, email: \ dp${_-}$datta@yahoo.com}
and Anuja Roy Choudhuri\thanks{
Ananda Chandra College, Jalpaiguri-735101, India, email: anujaraychaudhuri@ymail.com} \\
Department of Mathematics, University of North Bengal \\
Siliguri,West Bengal, Pin: 734013, India}
\date{}
\maketitle

\begin{abstract}
We present an elementary proof of the prime number theorem. The relative error  follows a golden ratio scaling law and respects the bound obtained from the Riemann's hypothesis. The proof is derived in the framework of a scale free  nonarchimedean extension of the real number system exploiting the concept of relative infinitesimals introduced recently in connection with ultrametric models of Cantor sets. The extended real number system is realized as a completion of the field of rational numbers $Q$ under a {\em new} nonarchimedean absolute value, which treats arbitrarily small and large numbers separately from a finite real number.
\end{abstract}

\begin{center} 
{\bf Key Words}: Prime number theorem, non-archimedean absolute value, relative infinitesimals, scale invariance
\end{center}

\begin{center} 
{\bf AMS Classification Numbers:} 11A41, 26E30, 26E35, 28A80
\end{center}


\baselineskip=19pt

\newpage

\section{Introduction}

We present a new proof of the Prime Number Theorem \cite{ed}. We call it elementary because the proof does not require any advanced techniques from the analytic number theory and complex analysis. Although the level of presentation is truly {\em  elementary} even in the standard of  the elementary calculus, except for some basic properties of non-archimedean spaces \cite{na1,na2}, some of the novel analytic structures that have been  uncovered here seem to have significance not only in number theory but also in other areas of mathematics, for instance the noncommutative geometry \cite{ncg}, infinite trees \cite{tree} and networks \cite{netw}.

The proof of the PNT is derived on a scale invariant, nonarchimedean model {\bf R} of the real number system $R$, involving nontrivial infinitesimals and infinities. The model $\bf R$ is realized as a completion of the field of rational numbers $Q$ under a {\em new} nonarchimedean absolute value $||.||$, which treats arbitrarily small and large numbers separately from any finite number. Stated more precisely, the absolute value reduces to the usual Euclidean value for a finite rational (real) number, whereas for an arbitrarily small number, defined in a limiting sense, it assigns a new scale invariant value. In the framework of the ordinary real analysis, such a value would be identified simply as zero. Arbitrarily large numbers are also assigned, by inversion,  analogous finite scale invariant values. The new absolute value is thus intrinsically scale invariant and inversion symmetric. The model constructed here is distinct from the usual nonstandard models of $R$ in two ways: (i) infinitesimals arise because of our nontrivial scale invariant treatments of small and large elements and so may be regarded members of $R$ itself (analogous to the spirit of Nelson \cite{ns1} (recall that Robinson's \cite{ns2} infinitesimals are {\em new} elements added to $R$ and may be considered as extraneous to $R$)) and (ii) It is a completion of $Q$ under the new absolute value. Thanks to the Ostrowski's theorem\cite{na1}, the so called scale invariant, infinitesimals are therefore modeled as $p$-adic integers $\tau_i$ with $|\tau_i |_p<1$, $|.|_p$ being the $p-$adic absolute value and is given by the adelic formula $\tau=\tau_p \underset{q>p}{\prod}(1+\tau_q)$. By inversion, infinities are identified with a general $p-$adic number $\tau$ with $|\tau|_p>1$. The infinitesimals considered here are said to be active as the definition involves an asymptotic limit of the form $t\rightarrow 0^+$, thereby letting an infinitesimal {\em directed}. We show that as a consequence the {\em value} of a scale invariant  infinitesimal $\tau$ would undergo infinitely slow variations over $p-$adic fields $Q_p$ as a scale free real variable $t^{-1}$, called the internal (intrinsic) timelike variable, approaches $\infty$ through the sequence of primes $p$. We show how these $p-$adic infinitesimals leaving in $\bf R$ conspire, via  nontrivial absolute values, to {\em influence} the structure of the ordinary real number set $R$, so that a finite real number $r$ gets an {\em infinitely small} correction term given by $r_{\rm cor}=r + \epsilon(t)||\tau||$, where $\epsilon(t^{-1})=\frac{\log t^{-1}}{t^{-1}}$ is the inverse of the asymptotic PNT formula of the prime counting function $\Pi(t^{-1})= \underset{p<t^{-1}} {\sum} 1$. In the ordinary analysis, there is {\em no room} for such an $\epsilon$ thus making the value of $r$ exact, viz., $r_{\rm cor}=r$. To recall \cite{ed}, 

{\bf Prime Number Theorem} states that  $\Pi(t^{-1})\epsilon(t^{-1})=1$ as $t^{-1}\rightarrow \infty$. Moreover, according to the Riemann's hypothesis, the relative correction (error) should be given by  $\Pi(t^{-1})\epsilon(t^{-1})=1+O(t^{{\frac{1}{2}}-\sigma})$, for any $\sigma>0$. So far no proof of the PNT could retrieve and substantiate the RH correction term, although all the current experimental searches on primes are known to agree with the RH value.

The proof of the PNT in the present formulation is accomplished by proving that the value $||\tau||$ of a scale free infinitesimal actually corresponds to the prime counting function
$\Pi(t^{-1})$ as the internal time $t^{-1}$ approaches infinity through larger and larger scales denoted by primes $p$. To this end we consider an equivalent (infinite dimensional) extension $\Re$ of $R$, in the usual metric topology, in which increments of a variable are mediated by a combination of linear translations and inversions. We show that there exist two types of inversions, viz., the {\em global or growing mode} leading to an asymptotic finite order variation in the value of a dynamic variable of $\Re$ following the asymptotic growth formula  of the prime counting function. On the other hand, the {\em localized} inversion mode is shown to lead to an asymptotic (golden ratio) scaling to a directed (dynamic) infinitesimal and the relative correction to the PNT.

\section{Motivation}

The motivation of the work is the recently uncovered nonsmooth solutions of the scale free equation \cite{dp1,dp2}
\begin{equation}\label{sfe}
t{\frac{d\tau}{dt}}=\tau, \ \tau(1)=1
\end{equation}
\noindent Although paradoxical (in view of the Picard's uniqueness theorem), it is difficult to ignore such solutions as pathological and the present work is an attempt to place them in a rigorous footing. To recapitulate the construction of such a solution, we introduce a new iteration in the neighbourhood of $t=1$ as follows. Let $t_\pm=1\pm\eta$ and $\tau_\pm=\tau(t_\pm)$. The standard solution is thus given by $\tau_{s\pm}=t_\pm$. We write a new solution as \cite{dp2} 
\begin{equation}\label{ns}
\tau_+=t_+, \ \tau_-=(1/t_+)\tau_1
\end{equation}
\noindent where the correction factor $\tau_1$ satisfies the self-similar equation 
\begin{equation}\label{sse}
t_{1-}{\frac{d\tau_{1-}}{dt_{1-}}}=\tau_{1-}
\end{equation}
\noindent corresponding to the smaller scale variable $t_{1-}=1-\eta^2$. Continuing this iteration ad infinitum we retrieve the standard solution as $\tau_-=\underset{i}\prod (1/t_{i+})=1-\eta,$ where $t_{0+}=t_+$.  We, however, get a second order discontinuous solution if we distort the small scale variables by introducing a parity violating (residual) rescaling $\tilde t_{1-}=\alpha_1 t_{1-}=1-\eta_1$,( but $\tilde t_{1+}= 1+\eta_1\neq\alpha_1 t_{1+}$) where $\alpha_1=1+\epsilon_1$ and $\eta_1=\alpha_1(\eta^2-\epsilon_1/\alpha_1)$. Then one obtains a solution as 
\begin{equation}\label{ns2}
\tau_+=t_+, \ \tau_-=({\frac{C}{t_+\tilde t_{1+}}})\tau_{2-}, \ \tilde t_{1+}=1+\eta_{1}
\end{equation}
\noindent where $C$ is a constant, so that the second order correction satisfies the self similar eq(\ref{sse}) in the rescaled variable $\tilde t_{2-}=1-\eta_2, \ \eta_2=\alpha_2(\eta_1^2- \epsilon_2 /\alpha_2)$, and so on ad infinitum. We note that $\tilde t_{n-}\rightarrow 1$ as $n\rightarrow \infty$, very fast because of two effects reducing small scale variables $\eta_n$, viz., (i) quadratically and also (ii) due to actions of residual rescalings at every iteration. We call the new iterated solution eq(\ref{ns2}) a generalized (nonsmooth) solution of eq(\ref{sfe}) since it has a discontinuity in the second derivative at $t=1$ for nonzero scaling parameters $\epsilon_n$. One can generate higher ($2^n$ order) derivative discontinuous solutions by initiating residual rescalings at an appropriate level, viz., at $(2^n-1)$th order iterated self similar equation. The smooth (standard) solution is thus retrieved in a non trivial manner, viz., by postponing the application of residual rescalings upto an infinite order of iterations. Another important property of nonsmooth solutions is the absence of the reflection symmetry. Let 
$P: P t_\pm =t_\mp$ denote the reflection transformation near $t=1$ 
($P\eta=-\eta$ near $\eta=0$). Clearly, eq(\ref{sfe}) is parity symmetric. 
So is the standard solution $\tau_{\pm} = t_\pm$ (since $P \tau_s = 
\tau_s$). However, the solution (\ref{ns2}) breaks this discrete symmetry 
spontaneously: $\tau_-^P=P \tau_+ = t_-,\, \tau_+^P = P \tau_- = 
C{\frac{1}{t_-}} {\frac{1}{t_{1+}^\prime}} 
{\frac{1}{t_{2+}^{\prime}}}\ldots$, which is of course a solution of 
eq(\ref{sfe}), but clearly differs from the original solution, $\tau_\pm^P \neq \tau_\pm$. Finally, the class of nonsmooth solutions could further be extended if one introduces nontrivial iterations even at right hand branch $\tau_+$ of the solution (\ref{ns2}) with a different  set of residual rescalings $\tilde\alpha_n$. Incidentally, the set of rescalings must be infinite. One can verify that the standard solution is retrieved for a finite set of scaling parameters.

In view of the above (rather large) class of nonsmooth solutions one feels compelled to consider a nonarchimedean framework  of the real analysis for a rigorous justification, which we intend to do here. We show that the said nonsmooth class of solutions actually correspond to the smooth solutions of eq(\ref{sfe}) in the associated nonarchimedean extension $\bf R$.

\section{Non-archimedean model}

\subsection{Infinitesimals}

Let $\tilde R$ be a nonstandard extension \cite{ns2} of the real number set $R$. Then an element of $\tilde R$, denoted as $\bf t$, is written as ${\bf t}=t+\bf 0$, $t\in R$, where $\bf 0$ denotes the set of infinitesimals (monad).  The set $\bf 0$ and hence $\tilde R$ is linearly ordered that matches with the ordering of $R$. The set $\bf 0$ is thus of  cardinality $c$, the continuum. The nonzero elements of $\bf 0$ are new numbers added to $R$ which are constructed from the ring ${\bf S}$ of sequences of real numbers via a choice of an ultrafilter to remove the zero divisors of $\bf S$. A nonstandard infinitesimal is realized as an equivalence class of sequences under the ultrafilter (for a recent illuminating discussion of ultrafilters and nonstandard models see \cite{tao}) and as remarked already may be considered extraneous to $R$. The magnitude of an element $\bf r$ of $\tilde R$ is evaluated using the usual Euclidean absolute value $|\bf r|_e$.

We now give a construction relating infinitesimals to arbitrarily small elements of $R$ in a more intrinsic manner. The words ``arbitrarily small elements" are made precise in a limiting sense in relation to a scale. The infinitesimals so defined are called relative infinitesimals \cite{sd1,sd2}.

\begin{definition}
Given an arbitrarily small positive {\em real} variable $t\rightarrow 0^+$, there exist a rational number $\delta>0$ and a positive {\em relative} infinitesimal $\tilde t(t)=\tilde t(t, \lambda)$, relative to the scale $\delta$,  satisfying $0<\tilde t(t)<\delta<t$ and the inversion rule $\tilde t(t)/\delta=\lambda(\delta/t)$, where $0<\lambda\ll 1$, is a real constant, so that $\tilde t$ is a (smooth) solution of  
\begin{equation}\label{sfe2}
 t{\frac{{\rm d}\tilde t}{{\rm d}t}}=-\tilde t
\end{equation}

\noindent A necessary condition for relative infinitesimals is that $0<\tilde t_1<\tilde t_2<\delta$ means  $0<\tilde t_1+\tilde t_2<\delta$ (so that $\tilde t_i$s, $ i=1,2$ must be sufficiently small relative to $\delta$). A {\em relative} infinitesimal $\tilde t$ is negative if $-\tilde t$ is a positive relative infinitesimal.
\end{definition}

Now, because of the linear ordering of ${\bf 0}^+$, the set of positive infinitesimals of $\tilde R$, as inherited from $R$ and the fact that the cardinality of ${\bf 0}^+$ equals that of $R$, there is a one-one correspondence between ${\bf 0}^+$ and $(0,\delta)\subset R$ which, we write in the present context as $\tau(t)=\tau_0(\delta/t)$ for an infinitesimal $\tau_0\in \bf 0^+$ and a relative infinitesimal $0<\tilde t<\delta, \ \delta\rightarrow 0^+$, which relates to the {\em real} variable $t$ by the inversion rule (\ref{sfe2}). Indeed, given $\delta>0$ in $R$, there exists a positive infinitesimal $\tau_0\in \bf 0^+$ such that $\tau/\tau_0=\tilde t/(\lambda\delta)=\delta/t$.  We, henceforth, identify $\bf 0^+$ (the set of positive infinitesimals) with the set of relative infinitesimals in $I_{\delta}^+=(0,\delta)\subset R$. The qualifier relative will also be dropped. We remark that, in this framework, a {\em real} variable $t$ is defined relative to the scale $\delta$ by the condition $t>\delta$.
Infinitesimals , so modeled, will be assigned a new absolute value. The real number set  $R$ equipped with this absolute value (denoted henceforth by $\bf R$) will be shown to support naturally the generalized class of solutions of eq(\ref{sfe}).

\begin{definition}
   An (relative) infinitesimal $\tilde t\in I_{\delta}=(-\delta,\delta) \ (\neq 0)$ is  assigned with a new absolute value,  $|\tilde t|=\log_{\delta^{-1}}\tilde t_1^{-1}, \ \tilde t_1=|\tilde t|_e/\delta$ as $\delta \rightarrow 0^+$. We also set $|0|=0$.
\end{definition}

\begin{remark}
{\rm 
We observe that there exists a nontrivial class of infinitesimals (viz., those satisfying $|\tilde t|_e\leq \delta.\delta^{\delta}$) for which the value $|\tilde t|$ assigned to an infinitesimal $\tilde t$ is a real number, i.e., $|\tilde t|\geq \delta$. One of our aim here is to point out nontrivial influence of these infinitesimals in real analysis. This is to be contrasted with the conventional approach. The Euclidean value of an infinitesimal is  numerically an infinitesimal. Further, the limit $ \delta\rightarrow 0^+$ is, of course, considered in the Euclidean metric. 

We also notice that the inversion in Definition 1 is nontrivial in the sense that in the absence of it, the scale $\delta$ can be chosen arbitrarily close to an infinitesimal $\tilde t$ (say), so that letting $\delta\rightarrow \tilde t$, which, in turn, $\rightarrow 0^+$, one obtains $|\tilde t|=0$. Thus, dropping the inversion rule, we reproduce the ordinary real number system $R$ with zero being the only infinitesimal (c.f., Erratum of \cite{sd1}).}
\end{remark}

Clearly, the above absolute value is well defined and also scale invariant. For, even as $\delta\rightarrow 0^+$ the relative ratio $\eta=t/\delta$ might be a constant (or approaches zero at a slower rate) in (0,1). Further, an infinitesimal $\tau\in \tilde R$ has a countable number of different realizations (representations), each for a specific choice of the scale $\delta$, having valuations $|\tilde t|_{\delta}$, where $\tilde t\in I_{\delta}$,  the image of $\tau$ under the above correspondence. Indeed, given a decreasing sequence of ({\em primary}) scales $\delta_n$ so that $\delta_n\rightarrow 0$ as $n\rightarrow \infty$, the limit in the above definition could instead be evaluated over a sequence of {\em secondary} smaller scales of the form $\delta_n^m, \ m \rightarrow \infty$ for each fixed $n$.  This observation allows one to extend the Definition 2 slightly which is now restated as 
\begin{definition}
(i)  Infinitesimals $\tau=\tilde t_n/\delta^n$ satisfying $0<\tilde t_n<\delta^n$ (and $\tilde t_n/\delta^n=(\tilde t/\delta)^n$ ) as $n\rightarrow \infty$ are called (positive) scale free $\delta-$ infinitesimals. By inversion, elements of $|\tau|_e>1$ are called scale free $\delta-$ infinities. In this scale free notation, all the finite real numbers are mapped to 1. We denote this set of $\delta$ infinitesimals and infinities by $R_{\delta}$.

 (ii)  A relative ($\delta$) infinitesimal $\tau(\neq 0)\in I_{\delta} $ is  assigned with a new ($\delta$ dependent) absolute value,  $|\tau|_{\delta}=\log_{\delta^{-n}}\tau_1^{-1}, \ \tau_1=|\tilde t_n|_e/ \delta^n, $ as $n \rightarrow \infty$. 
 \end{definition}
\noindent The Euclidean absolute value, however, is uniquely defined $|t|_e=t, \ t>0$.

\begin{proposition}
$|.|_{\delta}$ defines a nonarchimedean semi-norm on $\bf 0$.
\end{proposition}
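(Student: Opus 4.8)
The plan is to verify the three defining properties of a nonarchimedean semi-norm for the map $\tau \mapsto |\tau|_\delta$ on $\bf 0$ (identified, as in the excerpt, with relative $\delta$-infinitesimals in $I_\delta$), namely (a) $|\tau|_\delta \geq 0$ with $|0|=0$; (b) $|c\tau|_\delta = |\tau|_\delta$ up to the appropriate scaling behaviour, or more precisely the multiplicative/homogeneity-type relation compatible with the $\log$ definition; and (c) the strong (ultrametric) triangle inequality $|\tau_1 + \tau_2|_\delta \leq \max\{|\tau_1|_\delta, |\tau_2|_\delta\}$. First I would unwind the definition: writing $\tau_1 = |\tilde t_n|_e/\delta^n$ with $0 < \tau_1 < 1$ in the limit $n\to\infty$, we have $|\tau|_\delta = \log_{\delta^{-n}} \tau_1^{-1} = \dfrac{\log \tau_1^{-1}}{\log \delta^{-n}} = \dfrac{-\log\tau_1}{-n\log\delta}$, which is manifestly nonnegative since $0<\tau_1<1$ and $0<\delta<1$, and is $0$ exactly when $\tau_1 = 1$, i.e. when $\tilde t_n$ sits at the scale boundary — consistent with setting $|0|=0$ by convention. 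So (a) is immediate once the representation is fixed.

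For the homogeneity/scaling property, the key observation is that $\log$ converts the multiplicative structure of the Euclidean magnitudes into an additive one, and then division by $\log\delta^{-n}$ normalizes it; a scalar rescaling $\tilde t \mapsto \alpha \tilde t$ with $\alpha$ a finite real (hence mapped to $1$ in the scale-free notation, by Definition 3(i)) shifts $\log\tau_1^{-1}$ by the bounded amount $-\log\alpha$, which is negligible against $\log\delta^{-n} \to \infty$; so $|\alpha\tau|_\delta = |\tau|_\delta$ in the limit. This is where I would be slightly careful about exactly which version of the second semi-norm axiom is being claimed — for a genuine seminorm one wants $|\alpha \tau| = |\alpha|\,|\tau|$, but here the natural statement is scale-invariance $|\alpha\tau|_\delta=|\tau|_\delta$ for finite $\alpha$, reflecting that $|.|_\delta$ is a valuation-type (logarithmic) object rather than a norm in the classical sense; I would state this explicitly and tie it back to the scale-invariance already asserted after Definition 2.

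The main work, and the main obstacle, is the ultrametric inequality. Here I would use the necessary condition built into Definition 1: if $0 < \tilde t_1 < \tilde t_2 < \delta$ then $0 < \tilde t_1 + \tilde t_2 < \delta$, so the sum is again a legitimate relative infinitesimal at the same scale, and moreover $\tilde t_1 + \tilde t_2 < 2\tilde t_2$, so in the scale-free ($n\to\infty$) limit the $2$ is absorbed and the Euclidean magnitude of the sum is controlled by that of the larger summand. Translating through $|\tau|_\delta = \log_{\delta^{-n}}\tau_1^{-1}$: since $x \mapsto \log_{\delta^{-n}} x^{-1}$ is a decreasing function of $x$ on $(0,1)$, the inequality $|\tilde t_1 + \tilde t_2|_e \gtrsim |\tilde t_2|_e$ (the larger one dominates the sum from below up to the irrelevant constant factor, because both are positive) gives $\log_{\delta^{-n}}|\tilde t_1+\tilde t_2|_e^{-1} \leq \log_{\delta^{-n}}|\tilde t_2|_e^{-1} + o(1)$, i.e. $|\tau_1 + \tau_2|_\delta \leq \max\{|\tau_1|_\delta, |\tau_2|_\delta\}$ in the limit. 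The delicate points I expect to have to handle are: reducing the general signed case to the positive case via the last sentence of Definition 1; checking that the correspondence $\tau \leftrightarrow \tilde t$ respects addition well enough for this argument (or restricting the claim to the regime where it does); and confirming that all the $o(1)$ error terms coming from bounded multiplicative constants genuinely vanish under $n\to\infty$, which is exactly the scale-free limit in Definition 3. Once the positive case is done, "semi-norm" (as opposed to norm) is appropriate precisely because distinct infinitesimals can share the value $0$, so no positive-definiteness needs to be — and cannot be — proved.
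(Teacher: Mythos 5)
Your proposal is correct and follows essentially the same route as the paper: properties (i) and (ii) read off from the definition, and the ultrametric inequality obtained from the fact that $x\mapsto\log_{\delta^{-n}}x^{-1}$ is decreasing together with Definition 1's closure condition $0<\tilde t_1+\tilde t_2<\delta$, so that the sum's Euclidean magnitude is bounded below by the larger summand's and hence its value is bounded above by $\min\{|\tau_1|,|\tau_2|\}\leq\max\{|\tau_1|,|\tau_2|\}$. Your worry about the second axiom is resolved by the paper's Remark 2, which defines ``semi-norm'' to require only $|-\tau|=|\tau|$ (not homogeneity); the term is used because multiplicativity is deferred to Proposition 2, not because positivity fails.
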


\begin{remark}{\rm
To simplify notations, $|.|_{\delta}$, is written often as $|.|$. 
By a semi-norm we mean that $|.|$ satisfies  three properties (i) $|\tau|>0,\ \tau\neq 0$, (ii) $|-\tau|=|\tau|$ and (iii) $|\tau_1+\tau_2|\leq {\rm max}\{|\tau_1|, |\tau_2|\}$. Property $(iii)$ is called the strong $($ultrametric$)$ triangle inequality \cite{na1}.}
\end{remark}

\begin{proof}
 The first two are obvious from the definition. For the third, let $0<\tau_2<\tau_1$ in $\bf 0$. Then there exists $\delta>0$ and $\tilde t_i>0$ so that $0<\eta_2<\eta_1<1$ where $\eta_i=\tilde t_i/\delta^n\neq 1$ and $|\tau_i|=\log_{\delta^{-n}}\eta_i^{-1}$, since by definition, $\tau_i\propto \eta_i$. Clearly, $|\tau_2|>|\tau_1|$. Moreover, $0 <\eta_2<\eta_1<\eta_1+\eta_2<1$, by Definition 2. We thus have $|\tau_1+\tau_2|=\log_{\delta^{-n}}(\eta_1+\eta_2)^{-1}\leq \log_{\delta^{-n}}\eta_2^{-1}\leq |\tau_2|$.
One also has, $|\tau_1-\tau_2|= |\tau_1+(-\tau_2)|\leq {\rm max}\{|\tau_1|,|\tau_2|\}=|\tau_2|$. 
\end{proof}

Now, to restore the product rule, viz., $|\tilde t_1\tilde t_2|=|\tilde t_1||\tilde t_2|$, we note that given $\tilde t$ and $\delta$ ( $0<\tilde t<\delta$ ), there exist $0<\sigma(\delta)< 1$ and $v: {\bf 0}\rightarrow R$ such that 
\begin{equation}\label{value}
\frac{\tilde t_n}{\delta^n}= (\delta^n)^{\sigma^{v(\tilde t)}}
\end{equation}
\noindent so that, in the limit $n\rightarrow \infty$,  we have $|\tilde t|= \sigma^{v(\tilde t)}$. For definiteness, we choose $\sigma(\delta)=\delta$ (this is justified later). The function $v(\tilde t)$ is  a (discretely valued) valuation satisfying (i) $v(\tilde t_1\tilde t_2)=v(\tilde t_1)+v(\tilde t_2)$ and (ii) $v(\tilde t_1+\tilde t_2)\geq {\rm min}\{ (v(\tilde t_1),v(\tilde t_2)\}$. As a result, $|\tilde t_1\tilde t_2|=|\tilde t_1||\tilde t_2|$ and hence

\begin{proposition}
$|.|$ defines a nonarchimedean absolute value on $\bf 0$.
\end{proposition}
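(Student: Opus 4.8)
The plan is to upgrade the semi-norm of Proposition 1 to a genuine absolute value by verifying that the multiplicative factorization (\ref{value}) is consistent and that the induced map $v$ has the claimed valuation properties; once $|\tilde t|=\sigma^{v(\tilde t)}$ with $v$ additive on products, multiplicativity of $|.|$ is immediate and, combined with the already-established ultrametric inequality from Proposition 1, gives the three axioms of a nonarchimedean absolute value. So the real content is not the final one-line deduction but the justification of the representation (\ref{value}) together with the behaviour of $v$.

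First I would fix a scale $\delta$ and, for each infinitesimal $\tilde t$ with $0<\tilde t<\delta$, observe that since $0<\tilde t_n/\delta^n<1$ we may \emph{define} the exponent by $\tilde t_n/\delta^n=(\delta^n)^{w_n(\tilde t)}$ with $w_n(\tilde t)=\log_{\delta^n}(\tilde t_n/\delta^n)>0$; the point is that, by Definition 3 and the self-similar scaling $\tilde t_n/\delta^n=(\tilde t/\delta)^n$, the quantity $w_n(\tilde t)=\log_{\delta}(\tilde t/\delta)$ is in fact \emph{independent of} $n$, call it $w(\tilde t)$, and it equals $|\tilde t|_\delta$ by Definition 3(ii). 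Then I would write $w(\tilde t)$ in the form $\sigma^{v(\tilde t)}$ with $\sigma=\delta$, i.e. set $v(\tilde t)=\log_{\delta} w(\tilde t)=\log_{\delta}|\tilde t|_\delta$, so that (\ref{value}) holds by construction in the limit $n\to\infty$. The choice $\sigma=\delta$ is the normalization promised in the text "(this is justified later)", and I would simply record that any other $0<\sigma<1$ merely rescales $v$ by the constant $\log_\sigma\delta$ without affecting the valuation axioms.

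Next I would check the two valuation properties of $v$. For additivity on products, $v(\tilde t_1\tilde t_2)=\log_\delta|\tilde t_1\tilde t_2|$; using $|\tilde t|=w(\tilde t)=\log_\delta(\tilde t/\delta)$ I would verify directly that $\log_\delta(\tilde t_1\tilde t_2/\delta)$ relates to $\log_\delta(\tilde t_1/\delta)$ and $\log_\delta(\tilde t_2/\delta)$ in the way dictated by the self-similar inversion structure of Definition 1 (here one uses that products of relative infinitesimals are again controlled by the scale $\delta$, possibly after passing to a finer secondary scale $\delta^m$ as allowed by Definition 3), yielding $v(\tilde t_1\tilde t_2)=v(\tilde t_1)+v(\tilde t_2)$; equivalently one reads this off from $|\tilde t_1\tilde t_2|=|\tilde t_1||\tilde t_2|$, which is exactly the product rule we are trying to install, so care is needed to see that (\ref{value}) delivers it rather than presupposing it. For the ultrametric bound $v(\tilde t_1+\tilde t_2)\ge\min\{v(\tilde t_1),v(\tilde t_2)\}$, I would translate it through the monotonically \emph{decreasing} map $x\mapsto\log_\delta x$ (since $0<\delta<1$) into the statement $|\tilde t_1+\tilde t_2|\le\max\{|\tilde t_1|,|\tilde t_2|\}$, which is precisely Proposition 1(iii) already proved; so this half is free. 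Finally, with $v$ a discretely valued valuation, $|.|=\sigma^{v(\cdot)}$ is positive, symmetric, multiplicative and ultrametric, which is the definition of a nonarchimedean absolute value, completing the proof.

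The main obstacle I anticipate is precisely the self-consistency of the exponential representation (\ref{value}) and the well-definedness of $v$ as a genuine valuation: one must make sure that the exponent extracted at scale $\delta$ agrees, up to the harmless constant rescaling, with the one extracted at any secondary scale $\delta^m$, so that $v(\tilde t)$ depends only on the infinitesimal $\tilde t$ and not on the representative scale chosen (this is the scale-invariance flagged after Definition 2). A secondary subtlety is that multiplicativity and the ultrametric inequality pull in opposite directions — the former needs $v$ additive on products while the latter needs $v$ super-additive on sums — so one must confirm that the additive structure on products is genuinely extra information supplied by the factorization (\ref{value}) and the valuation axiom (i) for $v$, not something already forced by Proposition 1. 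Modulo these checks, the passage from semi-norm to absolute value is the short multiplicative-closure argument sketched above.
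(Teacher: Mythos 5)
Your skeleton is the same as the paper's: introduce the representation (\ref{value}), read off $|\tilde t|=\sigma^{v(\tilde t)}$, obtain multiplicativity from additivity of $v$ on products, and inherit the ultrametric bound from Proposition 1 (equivalently from axiom (ii) for $v$, since $x\mapsto\sigma^{x}$ is decreasing for $0<\sigma<1$ --- your translation of that half is correct and that half is indeed free). The gap is exactly where you flag it, and it is not a removable formality: the explicit valuation you construct fails axiom (i). With $w(\tilde t)=|\tilde t|_{\delta}=\log_{\delta}(\tilde t/\delta)$ (your correct observation that $w_n$ is independent of $n$) and $v(\tilde t)=\log_{\delta}w(\tilde t)$, additivity of $v$ on products is literally equivalent to $w(\tilde t_1\tilde t_2)=w(\tilde t_1)\,w(\tilde t_2)$. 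But a direct computation at the scale $\delta$ gives
\[
w(\tilde t_1\tilde t_2)=\log_{\delta}(\tilde t_1\tilde t_2/\delta)=w(\tilde t_1)+w(\tilde t_2)+1,
\]
and at the secondary scale $\delta^{2}$ one gets $\tfrac12(w(\tilde t_1)+w(\tilde t_2))$; neither equals $w(\tilde t_1)w(\tilde t_2)$ in general, because the logarithm in Definition 2 converts products of infinitesimals into \emph{sums} of values, not products. So the candidate $v$ you extract from Definition 3 is not additive on products, the product rule does not follow from your construction, and your own worry that the argument ``presupposes'' the product rule is exactly right rather than a subtlety to be checked later.

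To be fair, the paper does not close this gap either: its proof of Proposition 2 consists of \emph{postulating} the existence of a pair $(\sigma,v)$ satisfying (\ref{value}) together with the two valuation axioms, and then noting that multiplicativity of $|.|$ is a formal consequence. What is never shown --- and what your more concrete attempt exposes --- is that such a $v$ exists and is compatible with the value already pinned down by Definitions 2 and 3; the most natural candidate is not. A complete argument would have to either prescribe $|\tilde t_1\tilde t_2|$ by a rule other than applying Definition 2 to the real number $\tilde t_1\tilde t_2$, or exhibit a genuinely different $v$. As written, your proposal (like the paper's) establishes only the semi-norm of Proposition 1 together with an unverified multiplicativity axiom, so the step from Proposition 1 to Proposition 2 remains open.
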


\begin{remark}
{\rm The above definition of valuation (\ref{value}) can be extended further to include an extra piece, viz., 
\begin{equation}\label{value1}
\frac{\tilde t_n}{\delta^n}=(\delta^n)^{(\sigma^{v(\tilde t)} + \xi(\tilde t, \delta))}
\end{equation}
\noindent where $\xi \ >0$, vanishes with $\delta$ in such a manner that $(\delta^n)^{\xi}=1$ in the limit. This observation offers an alternative definition of a scale free ($\delta$) infinitesimal, viz., $\tau=\lim {\frac{\tilde t_n}{(\delta^n)^{(1+|\tau|_{\delta})}}}=O(\delta^{n\xi}), \ n\rightarrow \infty$, which will be useful in the following. Further, the (+) sign in eq(\ref{value1}) tells that $\tau$ actually is a nontrivial infinitesimal lying in (-1,1)$\subset R_{\delta}$}.
\end{remark}

We now recall the general topological structure  of a nonarchimedean space \cite{na1,na2}.

\begin{definition}
The set $B_{r}(a)=\{\ t\mid ||t-a||<r\ \}$ is called an open ball in $\mathbf{{0}}.\ $The set $\bar{B}_{r}(a)=\{\ t\mid ||t-a||\leq r\ \}$ is a closed ball in $\mathbf{0}.$
\end{definition}

\begin{lemma}
$(i)$ Every open ball is closed and vice-versa (clopen ball) $(ii)$ every
point $b\in B_{r}(a)$ is a centre of $B_{r}(a).\ (iii)$ Any two balls in $%
\mathbf{0}$ are either disjoint or one is contained in another. $(iv)$ $%
\mathbf{0}$ is the union of at most of a countable family of clopen
balls. $(v)$ The set $\bf 0$ equipped with the absolute value $|.|$ is totally disconnected.
\end{lemma}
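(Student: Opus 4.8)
The statement to prove is Lemma 2, which collects the five standard topological facts about a nonarchimedean (ultrametric) space: clopen balls, every point is a centre, balls nest or are disjoint, countable cover by clopen balls, total disconnectedness. These are classical consequences of the strong triangle inequality established in Proposition 1, so my proof plan is to derive each clause directly from $||\tau_1+\tau_2||\le\max\{||\tau_1||,||\tau_2||\}$, with the only nonroutine item being clause (iv), which must exploit the specific structure of $\mathbf 0$ as the set of relative $\delta$-infinitesimals rather than being purely formal.

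The plan is as follows. For (ii), I would take $b\in B_r(a)$, so $||b-a||<r$, and show $B_r(b)=B_r(a)$: if $||t-b||<r$ then $||t-a||=||(t-b)+(b-a)||\le\max\{||t-b||,||b-a||\}<r$, and symmetrically, so the two balls coincide; hence every point is a centre. Clause (i) follows by a companion argument: given the closed ball $\bar B_r(a)$ and a point $b$ with $||b-a||=r$, one checks $\bar B_r(a)=\bar B_r(b)$ the same way, and since the value set $\{\sigma^{v(\tilde t)}\}=\{\delta^{v}\}$ is discrete (the valuation $v$ from eq.~(\ref{value}) is discretely valued), there is a gap below $r$, so $\{t: ||t-a||<r\}=\{t:||t-a||\le r'\}$ for the next value $r'<r$, making the open ball closed; conversely a closed ball is a finite/countable union — indeed equals — an open ball of slightly larger radius by the same discreteness, so it is open. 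Clause (iii) is the standard nesting argument: if $B_s(a)\cap B_r(b)\neq\varnothing$ with, say, $s\le r$, pick $c$ in the intersection; then $a\in B_r(c)=B_r(b)$ by (ii), and for any $t\in B_s(a)$, $||t-b||\le\max\{||t-a||,||a-b||\}<r$, so $B_s(a)\subseteq B_r(b)$.

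For (iv) — the part I expect to be the main obstacle — the claim is that $\mathbf 0$ is a union of at most countably many clopen balls. Here I would use that the absolute value is $\delta$-discretely valued: the possible values $||\tau||=\delta^{v(\tilde t)}$, $v\in\mathbb Z$ (or a discrete subset), are countable, so the ``spheres'' $S_k=\{\tau: ||\tau||=\delta^k\}$ partition $\mathbf 0\setminus\{0\}$ into countably many pieces, each of which is a single clopen ball of the form $\bar B_{\delta^k}(a)\setminus B_{\delta^k}(a)$ — or, appealing to the one-one correspondence of $\mathbf 0^+$ with $I_\delta^+=(0,\delta)$ together with the $p$-adic model mentioned in the introduction, I would cover each sphere by the countably many residue balls inherited from the $p$-adic topology. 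Combining the countable family over all $k$ gives the result. The delicate point is that ``countable'' must be argued from the arithmetic of the value group and not from separability, since $\mathbf 0$ has cardinality $c$; the discreteness of $v$ is exactly what rescues it.

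Finally, (v): total disconnectedness is immediate once (i) is in hand. Given any two distinct points $a\neq b$ in $\mathbf 0$, set $r=||a-b||>0$ and consider $B_r(a)$, which by (i) is clopen; it contains $a$ but, since $||b-a||=r\not<r$, it does not contain $b$. Thus $B_r(a)$ and its complement disconnect any subset containing both $a$ and $b$, so the only connected subsets of $\mathbf 0$ are singletons (and $\varnothing$), i.e.\ $\mathbf 0$ with $|.|$ is totally disconnected. I would present (i)–(iii) and (v) in roughly a paragraph of short ultrametric estimates, and spend the bulk of the written proof justifying the countability in (iv) via the discrete valuation.
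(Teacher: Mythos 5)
Your proposal is correct and takes essentially the same route as the paper, which simply asserts that all five clauses ``follow directly from the ultrametric property'' and handles (iv) by the remark that $\mathbf{0}^{+}=\cup B(t_i)$ with $|.|$ constant (hence discretely valued) on each clopen ball; your detailed derivations from the strong triangle inequality, with the discreteness of the value set carrying clause (iv), are exactly the intended argument. The only slip is describing a sphere $S_k$ as ``a single clopen ball'' (in an ultrametric space a sphere is in general a union of several balls), but your fallback of covering each sphere by the countably many residue balls of the $p$-adic model repairs this, so no genuine gap remains.
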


The proof of these assertions follow directly from the ultrametric property. Because of the property (iv) the set $\bf 0^+$ can be covered  by at most a  countable family of clopen balls viz.,  ${\bf 0^+}=\cup B(t_i)$ where $t_i$ is a bounded sequence in {\bf 0}, on each of which the absolute value $|.|$ can have a constant value. With this choice the absolute value $|.|$ is discretely valued. As already indicated, the discreteness of the valuation group \cite{na2} also follows from the choice (\ref{value}).

\begin{remark}
{\rm To emphasize, the definition of relative infinitesimals takes note of relative position of $\tilde t$ with respect to $\delta$, which could then be extended as a geometric progression to a sequence $0<\tilde t_n<\delta^n$ so that $\tilde t_n/\delta^n=(\tilde t/\delta)^n$ for the evaluation of $|\tilde t|_{\delta}$. ( To simplify notations, we henceforth, use the same symbol $\tau$ to denote a scale free infinitesimal and the sequence $\tilde t_n$ of arbitrarily small real numbers called the real valued realizations of the infinitesimal $\tau$ ).  $\delta$ infinitesimals carry traces of residual influence of the scale, as reflected in the corresponding absolute values $|\tilde t|_{\delta}= \log_{\delta^{-1}} \delta/|\tilde t|_e$, whereas  a genuinely scale free one should be independent of any scale. We notice  that the above absolute value(s)  seemingly awards the real number system with a {\em novel } structure, viz., for an arbitrarily small scale $\delta$, numbers $t$ and $\tilde t$'s satisfying $t>\delta$ and $\tilde t< \delta$ now are represented as $t=\delta.\delta^{-|\tilde t_0|}$ and $\tilde t=\lambda.\delta.\delta^{|\tilde t_0|}$, so that the inversion rule is satisfied. Actually, $\tilde t$ belongs to a closed subinterval $F$ (say) of $(0,\delta)$, the size of which is determined by $\lambda$. Here, $\tilde t_0$ is a special reference point in $F$, for instance, $\tilde t_0=t^{-1}\in F$. It is often useful to rewrite the inversion rule as an exponentiation: $\tilde t/\delta=(\delta/t)^{\mu}$, so that $\mu \log(t/\delta)=\log\lambda^{-1} +\log(t/\delta)$, for a given $t$ and $\delta$. It also follows that although $\lambda$ is a constant, the exponent $\mu$ is actually a function both of the real variable $t$ and the scale $\delta$. For $t\rightarrow \delta$ and $\delta \rightarrow 0^+$, we have $\mu\rightarrow \infty$ and $|\tilde t_0|\rightarrow 0$ in such a manner that $|\tilde t|$ may have a finite value.  For a $\delta$ infinitesimal, on the other hand,  $\mu$ may tend to $1^+$ as $\delta\rightarrow 0^+$. Indeed, in that case, we have, for a given arbitrarily small $t$ and $\delta$, a sequence $\tilde t_n$ such that $\tilde t_n =\delta^n. \delta^{\mu|\tilde t_0|_{\delta}}= \delta^n.{\delta^n}^{(\tilde \mu|\tilde  t_0|_ {\delta})}$ where $\tilde \mu=\mu/n \rightarrow 1$ for a sufficiently large $n$. Notice that such a sequence always exists. In the limit  $\delta\rightarrow 0^+$, a $\delta$ infinitesimal should go over to a scale free infinitesimal. We note also that the nontrivial factors of the form $\delta^{-|\tilde t_0|}$ and $\delta^{\mu|\tilde t_0|}$  in the above representation correspond to the residual rescalings mentioned in Sec.2. In fact, letting $t_1=t/\delta=\delta^{-|\tilde t_0|}\approx 1+\eta$, so that $\eta=|\tilde t_0|\log\delta^{-1}$, we get $\tilde t_1= \tilde t/\delta=\delta^{\mu|\tilde t_0|}\approx 1-\mu\eta, \mu=O(1)(>0)$.
Moreover, the rate of approach of a real variable $t$, which equals 1 in the ordinary analysis, gets slower in the presence of scale free infinitesimals. In fact, $t$ approaches 0 now as $t^{1-\alpha}, \ \alpha=|\tilde t|$, rather than simply as $t$.}
\end{remark}
\begin{remark}{\rm 
Further,  the relativity of a real variable and an infinitesimal could be extended over infinitesimals. Let $0<\tilde t<\delta<t$. Then for a smaller scale $\tilde \delta$, given by $0< \tilde \delta< \tilde t<\delta<t$, $\delta$-infinitesimal $\tilde t$ behaves as a {\em real variable} when a smaller scale ($\tilde\delta)$ infinitesimal (relative to the original real variable $t$) $\tilde{\tilde t}$ is given by $0<\tilde{\tilde t}<\tilde\delta<\tilde t$ and so on ad infinitum.Thus there exist a scale dependent ordering among infinitesimals. To emphasize, a real variable undergoes changes by linear shifts whereas transition between a real variable and an infinitesimal is accomplished by {\em inversions} relative to the  scale $\delta$ as stated in Definition 1. From now on, we indicate a real variable by $t$ when an infinitesimal is denoted either by $\tilde t$ or $\tau$.}
\end{remark}

\begin{remark}{\rm 
Another point of significance to note is that the definition of relative $(\delta$) infinitesimals (and infinities) of (i) is based purely in relation to the Euclidean absolute value. The new absolute value is awarded to nontrivial infinitesimals only, viz., the elements of (-1,1) of $R_{\delta}$ (where, of course, $|0|_{\delta}=0$, by definition) and also to the elements of $R_{\delta}{\backslash}(-1,1)$, by inversion (c.f. Definition 5).}
\end{remark}

\begin{example}
{\rm To give an example of Definition 2, that is, of a $\delta$ independent absolute value, let $0<\tilde t<\delta<t$ such that $t=\delta.\delta^{-k}$ for a small but nonzero constant $k>0$, so that we have a class of infinitesimals $\tilde t=\lambda.\delta.\delta^k$, $\lambda>0$. Then $|\tilde t|=\underset{\delta\rightarrow 0}{\lim}\log_{\delta^{-1}} (\delta/\tilde t)=k,$ since $\lim \log_{\delta^{-1}}\lambda=0 $. Thus, all the elements from a closed subinterval $F$ of $(0,\delta)$ are awarded the same absolute   value $k$.}  This also shows that the Definition 2 is well defined and not empty.
\end{example}

\begin{example}{\rm 
To determine how $\lambda$ decides the size of the closed interval $F$, we consider an example as motivated by the triadic Cantor set. Let $0< \tilde t <1/3< \tilde t_0<1<t$ such that $\tilde t_0=t^{-1}$ and $\tilde t=\lambda \tilde t_0$. Thus, for a given $t$, $\tilde t\in [0,1/3]$ when $0\leq\lambda\leq t/3$. The value of $\mu$ is given by $\mu=\frac{\log\lambda^{-1}+\log t}{\log t}$ where $\tilde t=t^{-\mu}$. For a fixed $\lambda$, $\mu \rightarrow \infty$ as $t\rightarrow 1^+$. For an associated $\delta$ infinitesimal $\tilde t_n$, we have, on the other hand, $\tilde\mu= \mu/n$, so that  $\tilde \mu=O(1)$ as $n \rightarrow \infty$, when $t\approx \lambda^{-1/n}$. We note that although $\lambda=1$ implies $\mu=1$, existence of $\delta$ infinitesimals allow one to realise the limit $\mu\rightarrow 1$ in a nontrivial manner.}

\end{example}
\begin{example}{\rm 
The scale $\delta$ might correspond to the accuracy level in a computational problem. In this context, 0 in $R$ is identified with the interval $I_{\delta}$ (having cardinality of continuum) and thus is raised to $\bf 0$. A computation is therefore interpreted as an activity over an extended field $\tilde R$ (for an analogous, nevertheless, different approach, see \cite{berz}). By letting $\delta^n\rightarrow 0^+$ as $n\rightarrow \infty$, we consider an infinite precision computation, which is achieved progressively by increasing the accuracy level, when real numbers are represented $\delta$-adically, for instance, the binary or decimal representation corresponding to $\delta=1/2$ or $\delta=1/10$ respectively. In the process, one  arrives at a class of ( genuine)  infinitesimals $\tau=t/\delta^n\in (-1,1), \ n\rightarrow \infty$, called the scale free $\delta-$infinitesimals here, which seem to remain available (meaningful) even in the ordinary real analysis.  The conventional treatments of real analysis, however, are immune to such infinitesimals. To avoid any conflict with the standard real analysis results (for instance, the Lebesgue measure of $R$), the scale free infinitesimals may be assumed to live  in a zero measure Cantor set.  As a consequence, the Lebesgue measure of $R_{\delta}$ is zero. To re-emphasize, a scale free infinitesimal $\tau$ is an element of a Cantor set $C_{\delta}$ (with the scale factor $\delta$) in $I_{\delta}$, while the sequence of realizations $\tilde t_n$ corresponds to its $n$th iteration realization. In such a realization, $\tilde t_n$ (say) is an element of a closed (undeleted) subset $F_{in}$ of $I_{\delta}$, each element of which is mapped to the finite real number $t$ by the inversion rule. The Cantor set structure of the scale free infinitesimals is consistent with the ultramtricity of $\bf 0$. It is shown in Ref.\cite{sd1,sd2} that the ultrametric induced by $|.|$ can indeed lead to the usual (ultra-)metric topology of a given Cantor set.} 
\end{example}

\begin{remark}
{\bf Relationship between $|\tau|$ and $|\tau|_{\delta}$}: {\rm  As it is already noticed in remark 4, for an arbitrarily small $\delta\rightarrow 0^+$, we have the asymptotic representations $t=\delta.\delta^{-|\tilde t_0|}$ and $\tilde t=\lambda\delta.\delta^{|\tilde t_0|}$. Accordingly, $|\tilde t|$ may have a finite value. For  a $\delta$ infinitesimal $\tilde t$, on the other hand, the analogous representations are   $t=\delta.(\delta)^{-|\tilde t_0|_{\delta}}$ and $\tilde t_n= \delta^n. (\delta^n)^ { \tilde\mu(\delta)|\tilde t_0|_{\delta}}$, where $n\rightarrow \infty$ but $\delta$ is kept fixed, and  $\tilde\mu$ now depends on $\delta$. It thus follows that $|\tilde t|_{\delta}=\underset {n\rightarrow \infty}{\lim}\log_{\delta^{-n}}(\delta/\tilde t)^n = \log_{\delta^{-1}} (\delta/\tilde t)=\tilde\mu(\delta)|\tilde t_0|_{\delta}$. Recalling that $\tilde\mu= 1+\sigma_{\delta}(t)$, where $ \sigma_{\delta} \rightarrow 0$ with $\delta$, we therefore write, $|\tilde t|_{\delta}=(1+\sigma_{\delta}(t))|\tilde t_0|_{\delta}=\tilde t_1$.}
\end{remark}
\begin{example}
{\rm 
As an example of $\delta$ infinitesimals we consider $p-$ infinitesimals, which are related to the $p-$adic integers in $Z_p\subset Q_p$.
Let $\delta=1/p, p$ being a prime. Then there exist $p$ infinitesimals $\tau_p$ (actually an equivalence class of such infinitesimals) which are ordered according to the primes. Let $t=p^{-(1-1/p^r)}$, for some integer $r$, be a given value of a real variable $t$ relative to the scale $1/p$. Then we have a class of $p$ infinitesimals $0<\tau_{p} <1/p$ given by $\tau_{p} = p^{-n\mu_p(t)(1+1/p^r)}$, where, $0<r\leq n$ and $\mu_p=(1+\sigma_p),  \ \sigma_p$ being a small positive variable and goes to zero faster than $1/p$. Then we have $|\tau_{p}|=p^{-r} (1+\sigma_p)$.   The valuation $v(\tau_p)$ is now obtained as $v(\tau_p)=r+\log_p(1+\sigma_p(\tau_p))$. When $\sigma_p=0$, i.e., when $\delta\rightarrow 0$, one obtains  a $p-$ adic integer, realized as a $p$ infinitesimal, because in that case we have $|\tau_{rp}|= p^{-r}$,  corresponding to  $p-$adic integers $\tau_r\in Z_p$ so that $|\tau_r|_p=p^{-r}$. We recall that a $p-$adic integer is given by $\tau_r=p^r(1+\sum a_ip^i)$, where $a_i$ assumes values from $0,1,2,\ldots (p-1)$. The sequence of partial sums $s_m=p^r(1+\sum_0^m a_ip^i)$, which is divergent in the usual metric of $Q$ and is an infinitely large element in the conventional nonstandard models of $Q$, is realized in the present model as a $p$ infinitesimal $\tau_p$ (c.f. Sec.3.2).}
\end{example}

\begin{lemma}
A closed ball in $\bf 0$ is both complete and compact.
\end{lemma}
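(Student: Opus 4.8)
The plan is to establish both properties together by showing that a closed ball $\bar B_{r}(a)$ in $\mathbf 0$ is complete and totally bounded: a metric space is compact exactly when it is complete and totally bounded, and a compact metric space is automatically complete, so this single argument yields the whole lemma. Throughout I use the topology of Definition 6 and the structural facts of Lemma 1.

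For completeness I would use that $\mathbf R$ is, by construction, the completion of $Q$ under $||\cdot||$, and that $\mathbf 0$ sits inside $\mathbf R$ as a closed set (in the $\delta$-adic normalisation it is the unit ball, every finite real number having value $1$); since every ball is clopen by Lemma 1(i), the closed ball $\bar B_{r}(a)$ is then a closed subset of a complete space and hence complete. Alternatively, and more self-containedly, one can argue directly from discreteness of the valuation group (which follows from the representation (\ref{value}) together with Lemma 1): a Cauchy sequence in $\bar B_{r}(a)$ is either eventually constant or its pairwise distances decrease through a strictly decreasing sequence of values of the value group, producing a nested family of closed balls with radii tending to $0$, whose intersection is forced by completeness of the ambient completion to be a single point — the limit.

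For total boundedness the discreteness of $||\cdot||$ lets me reduce to the statement: for each radius $r'$ strictly below $r$ in the value group, $\bar B_{r}(a)$ is covered by \emph{finitely many} balls of radius $r'$. By Lemma 1(ii)--(iii) the balls of radius $r'$ meeting $\bar B_{r}(a)$ are pairwise disjoint (or equal) and each is contained in $\bar B_{r}(a)$, so they partition it, and all that remains is to see the partition is finite. Here I would invoke the arithmetic of the model: writing elements of $\bar B_{r}(a)$ via their $\delta$-adic — equivalently, by Ostrowski's theorem, $p$-adic — digit expansions (as in the discussion of $p$-infinitesimals and the adelic formula $\tau=\tau_p\prod_{q>p}(1+\tau_q)$), passing from level $r$ down to level $r'$ changes only finitely many digits, and each digit ranges over a finite alphabet ($\{0,1,\dots,p-1\}$ for a $p$-component, or the finite $\delta$-adic alphabet in general). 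Hence only finitely many radius-$r'$ balls occur, $\bar B_{r}(a)$ is totally bounded, and together with completeness this gives compactness.

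The main obstacle is precisely this finiteness-of-the-partition step: it is the ``finite residue field'' condition that separates locally compact ultrametric spaces from general ones, and in this model one must make sure the adelic product does not quietly introduce infinitely many branches at a single level. I expect to handle this by fixing the scale $\delta$ so that the argument takes place inside a single component $R_{\delta}$ (a single $p$-adic component after Ostrowski), where the branching of the ultrametric tree at each level is exactly $1/\delta$ (respectively $p$) and therefore finite; the remaining ingredients are the standard metric-space facts recalled above.
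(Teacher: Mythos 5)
Your argument is correct, but it takes a genuinely different route from the paper. The paper's entire proof is one sentence: the closed ball is complete and compact because the scale free infinitesimals of $\bf 0$ live in a Cantor set (a claim set up in Example 3 and later made concrete via the embedding $\phi: Q_p\rightarrow R_+$ of Proposition 5), and a Cantor set is compact and complete in its induced topology. You instead prove the statement intrinsically: completeness from closedness inside the completion (or from nested balls with radii shrinking through the discrete value group), and compactness via total boundedness, reducing to the finiteness of the partition of $\bar B_r(a)$ into balls of the next smaller radius. The comparison is instructive in both directions. Your approach is more self-contained and, importantly, it isolates the hypothesis that is actually doing the work — the finite branching at each level of the ultrametric tree (the ``finite residue field'' condition), which is exactly what distinguishes $Z_p$ from, say, an infinite-residue-field valued field where closed balls fail to be compact. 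The paper's appeal to the Cantor set buys brevity but quietly defers this burden: that $\bf 0$ is realized as a (compact) Cantor set rather than some non-compact totally disconnected set is precisely the finite-branching assumption in disguise, and in the paper it is introduced as an assumption (Example 3) rather than derived. Your closing caution about the adelic product $\tau=\tau_p\prod_{q>p}(1+\tau_q)$ potentially introducing infinitely many branches at a single level is well placed; fixing the scale so that the argument lives in a single component $R_\delta$, as you propose, is the right way to keep the branching finite, and is consistent with the paper's own convention of working at a fixed secondary scale $\delta=1/p$.
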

 The proof follows from the fact the scale free infinitesimals of $\bf 0$ live in a Cantor set.

{\rm Next, we extend this nonarchimedean structure of $\bf 0$ on the whole of $R$, which is already realized here as an {\em intrinsically} nonstandard extension $\tilde R$.
\begin{definition}
Let $I_{\delta}(r)=r+I_{\delta}, \ I_{\delta}=(-\delta,\delta), \ \delta>0$ for a real number $r\in R$. For a finite $r\in R$, i.e., when $r\notin I_{\delta}(0)$, we have $||r||=|r|_e=r$. For an $r\in I_{\delta}(0)$, on the other hand, we have $||r||=|r|=\underset{\delta\rightarrow 0}{\lim}\log_{\delta^{-1}} (\delta/r)=\sigma^{v(r)}$, while, for an arbitrarily large $r (\rightarrow \infty)$, i.e., when $|r|_e>N, \ N>0$, we define $||r||=|r^{-1}|$ with a scale $\delta\leq 1/N$.
\end{definition}

\begin{proposition}
$||.||$ is a nonarchimedean absolute value on $\tilde R$ (i.e. on $R$). It is discretely valued over the set of infinitely small and large numbers. The space $\{R,||.||\}$ is denoted as $\bf R$.
\end{proposition}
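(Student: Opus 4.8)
\ The plan is to verify, regime by regime, that the map $||.||$ of Definition 5 satisfies the three axioms of a nonarchimedean absolute value --- (i) $||r||>0$ for $r\neq 0$ and $||0||=0$; (ii) $||rs||=||r||\,||s||$; (iii) $||r+s||\leq\max\{||r||,||s||\}$ --- and then to record the discreteness of $||.||$ over the infinitely small and large numbers.

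Axiom (i) is immediate: on the finite reals $||r||=|r|_e>0$; on $I_{\delta}(0)\setminus\{0\}$ it is the strictly positive value supplied by Proposition 2 together with the convention $|0|=0$; and for $|r|_e>N$ one has $||r||=|r^{-1}|>0$, since $r^{-1}$ is then a nonzero infinitesimal relative to a scale $\delta\leq 1/N$. Axioms (ii) and (iii), as long as all the numbers involved stay within one regime, reduce to facts already in hand: the strong triangle inequality and multiplicativity on $I_{\delta}(0)$ are Propositions 1 and 2, obtained from the valuation (\ref{value}) and the relations $v(\tilde t_1\tilde t_2)=v(\tilde t_1)+v(\tilde t_2)$, $v(\tilde t_1+\tilde t_2)\geq\min\{v(\tilde t_1),v(\tilde t_2)\}$, together with the additivity condition built into Definition 1; on the finite reals they are the usual multiplicativity and triangle inequality for $|.|_e$; and for a product of two infinities, which is again an infinity, $||rs||=|(rs)^{-1}|=|r^{-1}s^{-1}|=|r^{-1}|\,|s^{-1}|=||r||\,||s||$ by Proposition 2.

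The real content --- and the step I expect to be the main obstacle --- is gluing the regimes together: products and sums in which the operands, or the result, belong to different regimes, most notably a product of a finite number $r$ with an infinitesimal $\tilde t$ and a sum $r+\tilde t$ of the same type, as well as a sum of two infinities that migrates back into the finite or infinitesimal range. The controlling fact is that in the defining limit $\delta\rightarrow 0^+$ a fixed finite factor behaves as a scale-free unit: $\log_{\delta^{-1}}r\rightarrow 0$ for every fixed $r\neq 0$, which is exactly the content of Definition 3$(i)$ (``all the finite real numbers are mapped to $1$'') and of Remarks 4--6 on the relativity of ``finite'' versus ``infinitesimal'' with respect to the vanishing scale $\delta$. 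Using this, one argues that $r\tilde t$ is again a scale-free $\delta$-infinitesimal carrying the same valuation $v(\tilde t)$, so that $||r\tilde t||=||\tilde t||$, consistent with (ii) once $||r||$ is read at its scale-free value $1$; similarly $r+\tilde t$ lies in the monad of $r$, remains finite relative to $\delta$, and has $||r+\tilde t||=||r||$ with the infinitesimal perturbation absorbed, consistent with (iii). The point that must be pinned down carefully is precisely that the two value assignments --- the Euclidean one on the finite scale and the logarithmic, scale-free one on the infinitesimal scale --- do not interfere in the limit, so that the patched function is genuinely multiplicative rather than merely the semi-norm of Proposition 1 before the product rule was restored via (\ref{value}).

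Finally, discreteness of $||.||$ over the infinitely small numbers follows from Lemma 1$(iv)$: $\mathbf 0^+$ is the union of at most countably many clopen balls on each of which $|.|$ is constant --- equivalently, the valuation $v$ in (\ref{value}) is discretely valued --- and this is transported to the infinitely large numbers by the inversion $r\mapsto r^{-1}$. Setting $\mathbf R=\{R,||.||\}$ then gives the statement.
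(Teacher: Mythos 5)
Your regime-by-regime plan is more ambitious than the paper's own argument, and at the two places where you go beyond it you run into genuine trouble. First, your within-regime check on the finite reals appeals to ``the usual multiplicativity and triangle inequality for $|.|_e$'' --- but axiom (iii) here is the \emph{strong} inequality, and the Euclidean value does not satisfy it: with $||r||=|r|_e$ on finite reals one has $||1+1||=2>\max\{||1||,||1||\}=1$. So the ultrametric axiom simply fails on the regime where you claim it reduces to a known fact. Second, your resolution of the mixed product $r\tilde t$ --- ``consistent with (ii) once $||r||$ is read at its scale-free value $1$'' --- contradicts Definition 5, which assigns $||r||=|r|_e=r$ to a finite $r$; your own computation $||r\tilde t||=||\tilde t||$ then shows multiplicativity fails whenever $|r|_e\neq 1$. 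You correctly identified the gluing of regimes as the main obstacle, but the patch you propose is not available under the definition as stated.

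For comparison, the paper's proof does not attempt any of this. It defers the infinitely small and large case wholesale to Proposition 2, and for a finite nonzero $r$ it only computes the \emph{value}: writing $r=s+\tau(t)$ with $s=r-t$ and $\tau(t)=t\in I_{\delta}^+$, it uses $||r||=\max\{||s||,||\tau(t)||\}=s=|r|_e$ as $t\rightarrow 0$, and then notes that discreteness is inherited from $|.|$. It never verifies the strong triangle inequality for a sum of two finite numbers, nor multiplicativity across regimes --- precisely the cases your attempt shows to be problematic. So the honest conclusion is that your proposal exposes obstructions that the paper's proof silently avoids; to complete either argument one would have to restrict the nonarchimedean axioms to the infinitely small and large elements (where Proposition 2 genuinely applies) rather than assert them on all of $R$.
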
 
\begin{proof}
For an infinitely small or large $r$, the proof follows from Proposition 2. For a finite (non-zero) value of $r\in R$, we have, on the other hand, $r=s+\tau(t), \ s=r-t, \tau(t)=t, \ t\in I_{\delta}^+$, so that $||r||={\rm max}\{||s||, ||\tau(t)||\}= s=|r|_e$, by letting $t\rightarrow 0$. Discreteness on the set of infinitesimals and infinities follows from the discreteness of $|.|$.
\end{proof}

\begin{proposition}
$\bf R$ is a locally compact complete (ultra-)metric space.
\end{proposition}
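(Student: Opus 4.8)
The plan is to verify, in turn, the three structural properties packed into the statement: that the metric induced by $\|\cdot\|$ is an ultrametric, that $\mathbf R$ is complete, and that it is locally compact. Set $d(x,y)=\|x-y\|$. By Proposition 3, $\|\cdot\|$ is a nonarchimedean absolute value on $\tilde R$, so $d$ satisfies the strong triangle inequality $d(x,z)\le\max\{d(x,y),d(y,z)\}$ and $(R,d)$ is an ultrametric space; here one reads the statement in the scale-free sense of Definition 4, where the finite reals collapse to the single class $1$ of $R_\delta$ and the genuine metric content resides on the infinitesimals $\mathbf 0$ and, by inversion, on the infinities, with the limit $\delta\to0^+$ understood throughout. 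The first thing I would make explicit is exactly this point: on the finite part $\|\cdot\|=|\cdot|_e$ is archimedean, so global ultrametricity only makes sense after passing to $R_\delta$ (or to the $\delta\to0^+$ limit), and I would phrase the proof accordingly.

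For completeness I would argue directly on Cauchy sequences, splitting into cases. A $d$-Cauchy sequence $(x_n)$ in $\mathbf R$ either (a) is eventually bounded away from the infinitesimal neighbourhood $I_\delta(0)$ and from $\infty$, in which case $\|\cdot\|$ coincides with $|\cdot|_e$ on a closed bounded interval containing the tail, and completeness of $(R,|\cdot|_e)$ supplies the limit; or (b) it eventually lies inside $I_\delta(0)$, hence inside a single closed ball of $\mathbf 0$ (two balls being nested or disjoint by Lemma 1(iii), and the $d$-diameters going to $0$), which is complete by Lemma 2, again giving a limit; or (c) the tail escapes to $\infty$, which by the inversion rule $\|r\|=|r^{-1}|$ reduces to case (b) for the inverted sequence. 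Alternatively, since $\mathbf R$ is by construction the completion of $Q$ under $\|\cdot\|$, completeness is immediate and the case analysis above is really just the verification that this completion is the set $R$ with the stated valuation; I would present whichever of the two is cleaner once the seam in case (a)/(b) is pinned down.

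For local compactness I would exhibit a compact neighbourhood of each point. If $r$ is finite, choose $\rho>0$ small enough that the closed Euclidean ball $\{x:|x-r|_e\le\rho\}$ misses $I_\delta(0)$; on it $\|\cdot\|=|\cdot|_e$, so it is an honest closed bounded interval, compact by Heine--Borel, and it is a $d$-neighbourhood of $r$. If $r$ is an infinitesimal, take a closed ball $\bar B_\rho(r)\subset\mathbf 0$; by Lemma 1(i) it is clopen, hence a neighbourhood of $r$, and by Lemma 2 it is compact. If $r$ is an infinity, the map $x\mapsto x^{-1}$ carries a neighbourhood of $r$ homeomorphically (by the inversion symmetry built into $\|\cdot\|$) onto a neighbourhood of the infinitesimal $r^{-1}$, so local compactness transfers from the previous case. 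Combining the three cases gives local compactness, and together with completeness and the ultrametric property this proves the Proposition.

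The step I expect to be the main obstacle is the ``seam'': making rigorous how the archimedean finite region and the nonarchimedean infinitesimal (and infinite) regions fit together into one space in which the Proposition's three adjectives hold simultaneously. Concretely, in the completeness argument one must rule out a Cauchy sequence that forever migrates between ``finite'' and ``infinitesimal'' behaviour as $\delta\to0^+$, and in the local-compactness argument one must check that the closed Euclidean ball around a finite point and the clopen ball around an infinitesimal are genuinely neighbourhoods in the \emph{same} topology; both require committing to the precise meaning of the limit $\delta\to0^+$ and to the identification of finite reals with the class $1$ of $R_\delta$, so I would settle that convention first and then let the rest follow from Lemmas 1--2 and Propositions 2--3.
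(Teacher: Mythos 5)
Your overall strategy is the same as the paper's, which disposes of this Proposition in a single sentence: ``The proof follows from Lemma 2 and Proposition 1.'' That is, the paper leans entirely on the completeness and compactness of closed balls in $\mathbf 0$ (Lemma 2) together with the absolute-value property, and leaves every case distinction implicit. Your three-way split for completeness (finite tail / infinitesimal tail / tail at infinity, the last reduced to the second by inversion) and the parallel split for local compactness is exactly the expansion one would have to write to turn the paper's citation into an argument, and your use of Lemma 1(iii) to trap an infinitesimal Cauchy tail inside a single clopen ball and of Lemma 2 to close that case is the intended mechanism. So in approach there is no divergence; you are simply doing the work the paper omits.

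There is, however, one concrete point where your case (a) and your finite-point compactness argument do not go through as written, and it is a sharper version of the ``seam'' you flag. The metric is $d(x,y)=\|x-y\|$, and $\|\cdot\|$ is defined piecewise on the \emph{argument}: for a finite point $r$ you get $\|r\|=|r|_e$, but for $r\in I_\delta(0)$ you get the logarithmic valuation. In the metric, the argument is always a \emph{difference} $x-y$, and whenever $x$ and $y$ are within $\delta$ of each other --- which is precisely the regime relevant to Cauchy tails and to small neighbourhoods --- that difference lies in $I_\delta(0)$, so $d(x,y)$ is governed by the infinitesimal valuation $|x-y|=\log_{\delta^{-1}}(\delta/|x-y|_e)$ and \emph{not} by $|x-y|_e$, even when $x$ and $y$ are both finite. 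Hence your claim that on a small closed Euclidean ball about a finite $r$ one has $\|\cdot\|=|\cdot|_e$, so that Heine--Borel applies directly, conflates the value of $\|\cdot\|$ at points with its value on differences; the local metric structure at a finite point is already nonarchimedean (which is in fact what makes the global ultrametric inequality salvageable at all, since $d$ never sees the Euclidean value on short differences). To repair case (a) you would need to check that $d$-Cauchy implies Euclidean-Cauchy under the logarithmic valuation of differences (it does, since $|x-y|\to$ its supremum forces $|x-y|_e\to0$ only with additional care about the $\delta\to0^+$ order of limits), and to repair the compactness of a neighbourhood of a finite point you need the $d$-topology, not the Euclidean one --- which is where Proposition 5 (equivalence of the two topologies) has to be invoked rather than Heine--Borel alone. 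The paper does not address any of this either, so the gap is inherited from the source; but as a free-standing proof your case (a) is the step that fails as stated.
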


The proof follows from Lemma 2 and Proposition 1.

\begin{proposition}
The topology induced by $||.||$ on $R$ is equivalent to the usual topology. Moreover, the imbedding $i: R\rightarrow {\bf R}$ is continuous.
\end{proposition}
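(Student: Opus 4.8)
The plan is to show that the metric $d(x,y)=||x-y||$ carried by $\mathbf R$ and the Euclidean metric $d_e(x,y)=|x-y|_e$ generate the same topology on $R$. Since $\mathbf R$ is an honest (ultra-)metric space by Propositions 5 and 7, this reduces to verifying, at each point $a\in R$ and each $\varepsilon>0$, a mutual interleaving of balls: there are $r_1,r_2>0$ with $B^{d}_{r_1}(a)\subseteq B^{d_e}_{\varepsilon}(a)$ and $B^{d_e}_{r_2}(a)\subseteq B^{d}_{\varepsilon}(a)$. Granting this, the identity map $i\colon (R,d_e)\to(R,d)=\mathbf R$ and its inverse are continuous, so $i$ is in particular the asserted continuous imbedding. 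I would organise the verification along the trichotomy of Definition 9: $a$ a finite nonzero real, $a=0$, and $a$ an arbitrarily large real; the last case will follow from the second by the inversion rule $||r||=|r^{-1}|$ together with the inversion symmetry of $||.||$.

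\textbf{Finite nonzero points.} Fix $a\neq 0$. If $x$ is Euclidean-close to $a$ with $x\neq a$, then the displacement $x-a$ is again a finite nonzero real, hence $x-a\notin I_{\delta}(0)$ for the relevant scale, and Definition 9 gives $||x-a||=|x-a|_e$; also $||0||=0$. So $d(\cdot,a)$ and $d_e(\cdot,a)$ literally coincide on a punctured Euclidean neighbourhood of $a$ (and agree at $a$), and the interleaving is trivial --- one may take $r_1=r_2=\varepsilon$, shrinking $\varepsilon$ so that $B^{d_e}_{\varepsilon}(a)$ avoids $0$. Thus the two topologies already agree locally at every finite nonzero point, where $i$ is a local homeomorphism. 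This part is routine.

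\textbf{The origin (hence, by inversion, infinity).} This is where the statement has content, since $||.||$ and $|.|_e$ genuinely differ on the scale interval $I_{\delta}(0)=(-\delta,\delta)$. The ingredients I would use are: (i) outside $I_{\delta}(0)$ one still has $||x||=|x|_e$; (ii) the nontrivially-valued infinitesimals inside $I_{\delta}(0)$ lie in a Lebesgue-null, nowhere dense Cantor set (Example 6); (iii) from $||x||=\log_{\delta^{-1}}(\delta/|x|_e)$, a point with $|x|_e=\delta^{1+c}$ has value $c$, so that as the scale $\delta\to 0^{+}$ the deleted part of $B^{d}_{r}(0)$ near $0$ shrinks to $\{0\}$ and $B^{d}_{r}(0)$ fills out $(-r,r)$. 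Concretely, taking $\delta<\varepsilon$: every $x$ with $|x|_e\geq\delta$ has $||x||=|x|_e$, while every $x$ with $|x|_e<\delta$ has $|x|_e<\varepsilon$ automatically; letting $\delta\to 0^{+}$ one extracts $B^{d}_{\varepsilon}(0)\supseteq(-\rho,\rho)$ and $(-\varepsilon,\varepsilon)\supseteq B^{d}_{\rho}(0)$ for a suitable $\rho>0$, which is the interleaving at $0$. For an arbitrarily large $a$ one applies the same to $a^{-1}$, using $||r||=|r^{-1}|$.

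The hard part will be precisely steps (ii)--(iii) at the origin and at infinity: one must argue that the ``extra'' finite values attached to infinitesimals (Remark 1) and the $\delta$-dependent holes that $B^{d}_{r}(0)$ carries around $0$ neither create new open subsets of $R$ nor obstruct the continuity of $i$. This is exactly what forces the passage to the limit $\delta\to 0^{+}$ together with the hypothesis that the scale-free infinitesimals inhabit a measure-zero Cantor set: as \emph{static} points of $R$ they are topologically inert, their nonarchimedean behaviour being \emph{directed} rather than pointwise, so that in the limit $d$ collapses to $d_e$ on all of $R$. Once this is in place the two topologies coincide, every $d$-ball is a Euclidean neighbourhood of each of its points, and $i\colon R\to\mathbf R$ is continuous, which completes the proof.
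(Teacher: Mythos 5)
Your treatment of finite nonzero points is exactly what the paper itself offers: its entire proof of this proposition is the single sentence ``Proof is an obvious application of Definition 5,'' which amounts to observing that $||x-a||=|x-a|_e$ on a punctured Euclidean neighbourhood of any finite $a\neq 0$, so the two metrics coincide there and $i$ is a local homeomorphism away from $0$ and infinity. Up to that point you and the paper agree, and you are right that this part is routine.

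The gap is in the part you yourself flag as ``the hard part,'' namely the origin (and, by inversion, infinity), and your argument does not close it. The containment $B^{d}_{\rho}(0)\subseteq(-\varepsilon,\varepsilon)$ is fine, but the reverse containment $(-\rho,\rho)\subseteq B^{d}_{\varepsilon}(0)$ fails for any fixed scale $\delta$: by Example 1, for every $k>0$ the points $\tilde t=\lambda\,\delta\cdot\delta^{k}$ satisfy $||\tilde t||=k$, so points with $||\tilde t||\geq\varepsilon$ accumulate at $0$ in the Euclidean sense, and no Euclidean interval around $0$ can sit inside $\{x:||x||<\varepsilon\}$. Your appeal to the Cantor set being Lebesgue-null and nowhere dense is the wrong kind of smallness for a topological claim: a set merely accumulating at $0$ already destroys the interleaving of balls, hence the continuity of $i$ at $0$. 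The remaining escape route --- ``letting $\delta\to 0^{+}$'' and declaring the nontrivially valued infinitesimals ``topologically inert'' --- is an assertion, not an argument. To make it one, you must commit to an interpretation of the $\delta$-limit for a \emph{fixed} point $x$: for fixed $x\neq 0$ and $\delta\to 0^{+}$ one eventually has $x\notin I_{\delta}(0)$, whence $||x||=|x|_e$ by Definition 9, and then every point of $R\setminus\{0\}$ receives its Euclidean value and the origin case collapses to the trivial one. That is evidently the reading the paper silently adopts when it calls the result obvious; without stating it, your proof leaves the continuity of $i$ at $0$ (and at infinity) unestablished.
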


Proof is an  obvious application of Definition 5.

\begin{definition}
A ($\delta$) infinitesimal $\tau$ is an (non-archimedean) integer if $|\tau|<1$. It is a unit if $|\tau|$=1.
\end{definition}

\begin{lemma}
A unit $\tau_u$ has the form $\tau_u=1+\tau$ where $|\tau|<1$.
\end{lemma}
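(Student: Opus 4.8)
The plan is to read the decomposition straight off the multiplicative structure of $|.|$ together with the scale-free identification of Definition 3(i). By Definition 7 a unit satisfies $|\tau_u| = 1$. Writing the absolute value in the exponential form $|\tau| = \sigma^{v(\tau)}$ supplied by eq.~(\ref{value}) (with $\sigma = \delta$ and $v$ the discretely valued valuation of Proposition 2), the equality $|\tau_u| = 1 = \sigma^0$ forces $v(\tau_u) = 0$. Hence, by the refined representation eq.~(\ref{value1}) of Remark 3, the real-valued realizations $\tilde t_n$ of $\tau_u$ satisfy $\tilde t_n/\delta^n = (\delta^n)^{1+\xi_n}$ with $(\delta^n)^{\xi_n}\to 1$; equivalently the scale-free representative $\lim_n \tilde t_n/(\delta^n)^{1+|\tau_u|} = \lim_n (\delta^n)^{\xi_n}$ is, up to a piece vanishing with $\delta$, the common image $1$ of all finite reals (Definition 3(i)).

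Next I would set $\tau := \tau_u - 1$, the subtraction being carried out on the realizations inside $R_\delta$ (so that $1$ here is the value-$1$ class, not the ambient real $1$), so $\tau$ is realized by $\delta^n\big[(\delta^n)^{\xi_n} - 1\big]$. Since $\xi_n\to 0$, this sequence sits at a strictly smaller scale than $\delta^n$ while not vanishing outright unless $\tau_u = 1$ exactly; in that trivial case $|\tau| = |0| = 0 < 1$ and we are done. Otherwise $\tau$ is a genuine $\delta$-infinitesimal whose leading term has cancelled, so its (discrete) valuation satisfies $v(\tau) \ge 1$, and eq.~(\ref{value}) gives $|\tau| = \sigma^{v(\tau)} \le \sigma = \delta < 1$. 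Either way $\tau_u = 1 + \tau$ with $|\tau| < 1$, which is the claim; as a consistency check the ultrametric inequality yields $|1+\tau| \le \max\{|1|,|\tau|\} = 1$, with equality since the two values are unequal, recovering $|\tau_u| = 1$.

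The main obstacle is precisely the middle step: one must show that the residual factor $(\delta^n)^{\xi_n}$ contributes no value-$1$ content beyond the single leading $1$, i.e. that $\tau_u - 1$ genuinely drops to a strictly smaller scale rather than retaining finite-real content and keeping $|\tau_u - 1| = 1$. This is where the discreteness of the valuation group and the scale-free collapse of Definition 3(i) must be invoked carefully — concretely, that the value-$1$ elements of $R_\delta$ form a \emph{single} residue class (the class of $1$), so that any two of them differ by an element of $\{\,\tau : |\tau| < 1\,\}$; then $\tau_u \equiv 1 \pmod{\{|\tau|<1\}}$ is immediate. Once that reduction is in place, the remaining manipulations with the exponents $\xi_n$ are routine estimates of the kind already performed in Remark 4 and Example 7, and no further input is needed.
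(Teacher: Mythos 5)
Your proposal follows essentially the same route as the paper's own proof: both derive $v(\tau_u)=0$ from $|\tau_u|=1$, invoke the refined valuation formula (\ref{value1}) to obtain $\tilde t_{un}/\delta^{2n}=(\delta^n)^{\xi}\rightarrow 1$, and then identify $\tau_u-1$ with a vanishing residual of absolute value strictly less than $1$ (the paper packages this last step as $\tau_u=\lim e^{\tilde\tau_n}=1-\tau_0$ with $\tilde\tau_n=\xi\log\delta^n\rightarrow 0$ and ultrametricity, where you subtract $1$ directly and argue $v(\tau_u-1)\geq 1$). The gap you flag --- that the value-$1$ elements must form a single residue class so that the difference genuinely drops in scale --- is precisely the point the paper also passes over lightly, so your account is faithful to the argument as given.
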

\begin{proof}
A scale free ($\delta$) unit is defined by $||\tau_u||=1$. According to the valuation equation (\ref{value1}), we have 
\begin{equation}\label{valueu}
{\frac{\tilde t_{un}}{\delta^n}}=(\delta^n)^{1+\xi(\tau_u,\delta)}
\end{equation}
\noindent since $v(\tau_u)=0$ ($\tilde t_{un}$ are various realizations of $\tau_u$). Thus $ {\frac{\tilde t_{un}}{\delta^{2n}}}=(\delta^n)^ {\xi} \rightarrow 1, $ as $n\rightarrow \infty$ and subsequently $\delta\rightarrow 0$. Thus writing $\tilde \tau_n=\xi\log\delta^{n}$, we have the lemma, since,  as $n \rightarrow \infty$, $\tau_u=\lim {\frac{\tilde t_n}{\delta^{2n}}}=\lim e^{\tilde \tau_n}=1-\tau_0$, when we have $||\tilde\tau(1+ O(\tilde\tau)|| =||\tau_0||$,( because of the ultrametricity of $||.||$) for a $\tau_0=\tilde\tau(1+ O(\tilde\tau))$.
\end{proof}
\begin{lemma}
Let $\tau_i$ be two $\delta_i$ infinitesimals, $i=1,2$ such that $\delta_1>\delta_2$. Then there is a canonical decomposition $\tau_1=\tilde\tau_1(1+\tau_2))$ where $|\tau_1|=|\tilde\tau_1|<1$.  
\end{lemma}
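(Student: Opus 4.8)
The plan is to refine the scale of $\tau_1$ down to $\delta_2$ and to split off, from the valuation representation of $\tau_1$, a single unit factor living at the finer scale. I would start from eq.\,(\ref{value1}) written for $\tau_1$ at its own scale $\delta_1$,
\begin{equation*}
\frac{\tilde t_{1,n}}{\delta_1^{\,n}}=(\delta_1^{\,n})^{\,\sigma_1^{\,v(\tau_1)}+\xi(\tau_1,\delta_1)},\qquad \sigma_1=\delta_1,
\end{equation*}
where the remainder exponent $\xi>0$ vanishes with $\delta_1$. Because $\delta_2<\delta_1$, the term $\xi(\tau_1,\delta_1)$ is precisely the part of this representation that gets resolved only on passing to the finer scale $\delta_2$; invoking the nested-scale ordering among infinitesimals discussed above (a coarser-scale infinitesimal playing the role of a real variable relative to a finer scale), this residual piece is realized as a genuine $\delta_2$ infinitesimal.

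Concretely I would set $\tilde\tau_1$ to be the infinitesimal whose $n$th realizations are $s_{1,n}=\delta_1^{\,n}(\delta_1^{\,n})^{\sigma_1^{\,v(\tau_1)}}$, i.e.\ the pure $\delta_1$-scale object carrying the same valuation as $\tau_1$; then $v(\tilde\tau_1)=v(\tau_1)$, so on letting $n\to\infty$ and then $\delta_1\to 0^+$ one gets $|\tilde\tau_1|=\sigma_1^{\,v(\tau_1)}=|\tau_1|$. The residual factor $u_n:=\tilde t_{1,n}/s_{1,n}=(\delta_1^{\,n})^{\xi}$ is governed entirely by $\xi$ and hence has vanishing valuation, so in the limit it defines a unit $u$; by the preceding Lemma, $u=1+\tau_2$ with $|\tau_2|<1$, and by the first paragraph $\tau_2$ is a $\delta_2$ infinitesimal. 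Since $\tilde t_{1,n}=s_{1,n}\,u_n$ for every $n$, passing to the limit gives the decomposition $\tau_1=\tilde\tau_1(1+\tau_2)$.

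It then remains to record the norm identities: by multiplicativity of $|.|$ (Proposition 2) and the strong triangle inequality, $|1+\tau_2|={\rm max}\{1,|\tau_2|\}=1$, whence $|\tau_1|=|\tilde\tau_1|\,|1+\tau_2|=|\tilde\tau_1|$, while $|\tau_1|=\sigma_1^{\,v(\tau_1)}<1$ because $0<\sigma_1=\delta_1<1$ and $v(\tau_1)>0$ (here $\tau_1$ is taken to be a proper infinitesimal, i.e.\ not a unit). The decomposition is canonical in that any factorization $\tau_1=\tilde\tau_1 w$ with $w$ a unit forces $|\tilde\tau_1|=|\tau_1|$, hence $v(\tilde\tau_1)=v(\tau_1)$, so the pure-scale choice of $\tilde\tau_1$ above and hence $1+\tau_2=\tau_1/\tilde\tau_1$ are fixed once the scales $\delta_1>\delta_2$ are. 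I expect the only genuinely non-routine step to be the claim flagged in the first paragraph --- that the remainder exponent $\xi(\tau_1,\delta_1)$, which enters (\ref{value1}) merely as a formal $\delta_1\to 0^+$ correction, is faithfully and canonically carried by an honest $\delta_2$ infinitesimal $\tau_2$ --- which needs the nested-scale machinery (a coarser-scale infinitesimal serving as a real variable with respect to a finer scale) to be used with care, rather than the purely algebraic manipulation of the valuation identity.
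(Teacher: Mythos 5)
Your proposal follows essentially the same route as the paper: both split the valuation representation (\ref{value1}) of $\tau_1$ into a principal part carrying $v(\tau_1)$ (your $\tilde\tau_1$, the paper's ball-centre $\tilde\tau_{1i}$) and a residual factor $(\delta_1^n)^{\xi}$, which is then identified as a unit $1+\tau_2$ via Lemma 3. The one step you flag as genuinely non-routine --- realizing the remainder exponent at the finer scale --- is the paper's explicit substitution $\xi=\tilde\xi\,\frac{\log\delta_2}{\log\delta_1}$, which converts $(\delta_1^n)^{\xi}$ into the scale-$\delta_2$ unit form of (\ref{valueu}).
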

\begin{proof}
Recall that $\delta-$ infinitesimals live in (0,1), which is covered by clopen balls $B(\tilde\tau_{1i}), \ i=1,2,\ldots$. Let $\tau_1\in B(\tilde\tau_{1i})$ for some $i$. 
Suppose $\tau_{1i}$ be defined by (\ref{value}) ( that is, corresponding to $\lambda=1$ in the inversion rule). For a general infinitesimal $\tau_1$ we have the extended definition given  by (\ref{value1}), viz.,
\begin{equation}\label{value2}
\frac{\tilde t_{1n}}{\delta_1^n}=(\delta_1^n)^{(|\tau_{1i}|_{\delta_1} + \xi(\tau_1,\delta_2))}
\end{equation}
\noindent where $\xi$ goes to zero faster than $|\tau_{1i}|=\delta^{v(\tau_{1i})}$ such that $(\delta_1^n)^{\xi}=1$ as $\delta_1^n\rightarrow 0^+$. Now writing $\xi=(\tilde\xi)\frac{\log\delta_2}{\log\delta_1}$ and using Lemma 3, one obtains
\begin{equation}\label{decom}
{\frac{\tilde t_{1n}}{\delta_1^n}}
={\frac{\tilde t_{1ni}}{\delta_1^n}}{\frac{\tilde t_{un}}{\delta_2^{2n}}}
\end{equation}
and so taking the limit $n\rightarrow \infty$  the desired result follows.
\end{proof}

Let us recall that $d\tau/dt=0$ means $\tau$=constant, in $R$. However, in a nonarchimedean space $\bf R$, $\tau$ would be a {\em locally} constant function, which we call here a {\em slowly varying} function. In a nonarchimedean extension of $R$, $\lambda$ (in Definition 2) may be a slowly varying function. Thus, the nonsmooth solutions of $R$ (Section 2) are realized as smooth in the nonarchimedean space $\bf R$. We recall that differentiability in a nonarchimedean space is defined in the usual sense by simply replacing the usual Euclidean metric by the ultrametric $||x-y||, \ x,y\in \bf R$.

\begin{definition}
Let $f:{\bf R}\rightarrow {\bf R}$ be a mapping from $\bf R$ to itself. Then $f$
is differentiable at $t_{0}\in \bf R$ if $\exists $ $l\in \bf R$ such that given $%
\varepsilon >0,\exists $ $\eta >0$ so that%
\begin{equation}\label{diff1}
\left\vert \frac{\parallel f(t)-f(t_{0})\parallel }{\parallel
t-t_{0}\parallel }-\parallel l\parallel\right\vert <\varepsilon
\end{equation}%
when $0<\parallel t-t_{0}\parallel <\eta ,$ and we write $f^{\prime}(t_0)= {\frac{df(t_0)}{dt}}=l$. 

\end{definition}

\begin{remark}{\rm 
The above definition is in conformity with the more conventional definition, viz., 
\begin{equation}\label{diff2}
\parallel \frac{ f(t)-f(t_{0}) }{t-t_{0}}- l\parallel <\varepsilon
\end{equation}%
\noindent since $\left\vert \frac{\parallel f(t)-f(t_{0})\parallel }{\parallel
t-t_{0}\parallel }-\parallel l\parallel\right\vert_e <\parallel \frac{ f(t)-f(t_{0}) }{t-t_{0}}- l\parallel<\varepsilon$. 
As long as $|t-t_0|_e\rightarrow 0^+$, but  $t-t_0\geq O(\delta)$, the above definition reduces to the ordinary differentiability. But when $|t-t_0|_e\rightarrow \delta$ the above gets extended to the logarithmic derivative $t{\frac{d\log f(t_0)}{dt}}=l$, when we make use of the nonarchimedean absolute value $|.|$.}
\end{remark}

So far in the above discussion the scale $\delta$ is unspecified. In the following, we introduce the nonarchimedean absolute value (Definition 5) on the field of rational numbers $Q$, construct its Cauchy completion and finally, because of the Ostrowski theorem, relate it to the local $p$-adic fields. We get a {\em minimal} nonarchimedean extension of $R$ (and which is a subset of the above $\bf R$) for each given scale $\delta$, thus sufficing our purpose of relating the (secondary) scales $\delta$ with the inverse primes viz., $\delta=1/p$. Consequently, we have a countable number of distinct field extensions ${\bf R}_p$ of $R$ depending on the scale at which the origin 0 of $R$ is probed.

 \subsection{Completion of the field of rational numbers}
On the field of rationals $Q$, we now introduce a valuation $||.||: Q\rightarrow R_+$.
\begin{definition}
Let $I_{\delta}(r)=r+I_{\delta}, \ I_{\delta}=(-\delta,\delta), \ \delta>0$ for a rational number $r\in Q$. For a finite $r\in Q$, i.e., when $r\notin I_{\delta}(0)$, we have $||r||=|r|_e=r$. For an $r\in I_{\delta}(0)$, on the other hand, we have $||r||=|r|=\lim\log_{\delta^{-1}} (\delta/r)=\delta^{v(r)}$, while, for an arbitrarily large $r (\rightarrow \infty)$, i.e., when $|r|_e>N, \ N>0$, we define $||r||=|r^{-1}|_u$ with a scale $\delta\leq 1/N$.
\end{definition}

Proposition 2 is now restated for the rationals.

\begin{corollary}
$||.||$ is a  nonarchimedean absolute value over $Q$.
\end{corollary}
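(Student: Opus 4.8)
The plan is to mirror, essentially verbatim, the proof of Proposition 2 (the statement that $|.|_\delta$ is a nonarchimedean absolute value on $\mathbf{0}$), but now reading all the data through Definition 13 (the valuation on $Q$) rather than Definition 5. The key observation is that Definition 13 is exactly Definition 5 with $R$ replaced by $Q$: on a finite rational $r\notin I_\delta(0)$ the value is the ordinary Euclidean one, $\|r\|=|r|_e$; on a small rational $r\in I_\delta(0)$ it is the logarithmic value $|r|=\lim\log_{\delta^{-1}}(\delta/r)=\delta^{v(r)}$; and on a large rational it is defined by inversion. So the required verification splits, just as in Proposition 11, into the three regimes, and in each regime the argument has already been done for the reals.

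First I would check the three semi-norm axioms. Positivity ($\|r\|>0$ for $r\ne 0$) and symmetry ($\|-r\|=\|r\|$) are immediate from the definition in each of the three cases. For the strong triangle inequality $\|r_1+r_2\|\le\max\{\|r_1\|,\|r_2\|\}$ I would argue by cases on whether each $r_i$ is finite, infinitesimal, or infinite: (i) if both are finite this is the trivial ultrametric bound $\max\{|r_1|_e,|r_2|_e\}$ together with the fact that the sum of two finite rationals outside $I_\delta(0)$ is again finite (or, if their sum lands in $I_\delta(0)$, its $\|\cdot\|$-value is $\le\delta<$ either Euclidean value); (ii) if both are infinitesimal this is precisely the computation carried out in the proof of Proposition 1, using the additivity condition of Definition 1 ($0<\eta_1+\eta_2<1$) and monotonicity of $\log_{\delta^{-n}}(\cdot)^{-1}$; (iii) the mixed case reduces to the previous two because an infinitesimal has $\|\cdot\|\le\delta$ while a finite or infinite element has $\|\cdot\|\ge\delta$ (indeed $\ge 1$ for a finite one), so the sum sits in the regime of the larger term. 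Then multiplicativity $\|r_1 r_2\|=\|r_1\|\,\|r_2\|$ follows from the valuation property of $v$ recorded after equation (\ref{value}), exactly as in Proposition 2, again splitting into the finite/infinitesimal/infinite cases and using that $Q$ is a field so the product of finite elements is finite and the product of an infinitesimal and a finite unit is an infinitesimal of the same value.

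The only genuine point requiring care — and the step I expect to be the main obstacle — is well-definedness of the infinitesimal branch over $Q$: one must know that for a small rational $r$ there actually exist a rational scale $\delta>0$ and a limiting representation realizing $r$ as a relative infinitesimal in the sense of Definition 1, so that $\lim\log_{\delta^{-1}}(\delta/r)$ exists as a (discretely-valued) quantity. This is where the restriction to rational $\delta$ and the Ostrowski-theorem remarks preceding Section 3.2 become relevant; I would simply invoke the construction already given there, noting that the interval $I_\delta$ and the Cantor-set realization of infinitesimals are defined using only rational endpoints, so nothing new is needed beyond restricting the reals of Definition 5 to $Q$. Once well-definedness is granted, the three axioms plus multiplicativity give a nonarchimedean absolute value, and discreteness over the infinitesimals/infinities is inherited from the discreteness of $|.|$ noted after Lemma 1. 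Hence the corollary, and one can then proceed to form the Cauchy completion of $(Q,\|\cdot\|)$ in the next subsection.
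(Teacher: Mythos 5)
Your overall strategy is the one the paper intends: the paper offers no separate argument for this corollary at all, saying only that ``Proposition 2 is now restated for the rationals,'' so reducing everything to the earlier verification of the semi-norm axioms (Proposition 1) and the product rule coming from the valuation $v$ of equation (6) is exactly the right move. Your attention to well-definedness of the infinitesimal branch over $Q$ (rational scales, the Ostrowski remarks forcing $\delta=p^{-1}$) addresses a point the paper leaves implicit but clearly presupposes, so that part is fine.

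However, your case (i) contains a step that fails as written. For two finite rationals $r_1,r_2$ outside $I_\delta(0)$ the definition gives $\|r_i\|=|r_i|_e$, and the Euclidean absolute value is archimedean on that regime: taking $r_1=r_2=1$ gives $\|r_1+r_2\|=2>1=\max\{\|r_1\|,\|r_2\|\}$, so ``the trivial ultrametric bound'' you invoke simply does not hold. This is not a defect you introduced --- it is latent in the statement itself --- but the paper sidesteps it by never testing the strong triangle inequality on a pair of finite elements: its proof of Proposition 2 only evaluates $\|r\|$ for a single finite $r$ via the decomposition $r=s+\tau(t)$, $\|r\|=\max\{\|s\|,\|\tau(t)\|\}=|r|_e$, and the discreteness claim is confined to the infinitely small and large elements. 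If you want your write-up to match what the paper actually establishes, restrict the ultrametric verification to the infinitesimal and infinite regimes (where Proposition 1 and the valuation $v$ do the work) and present the finite regime only as that single-element computation, rather than asserting the strong triangle inequality for arbitrary pairs of finite rationals, which is false for the Euclidean value.
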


Now by the Ostrowski theorem [2], any nontrivial absolute value on $Q$ must be equivalent to any of the $p-$adic absolute value $|.|_p$, $p>1$ being a prime and $|.|_{\infty}= |.|_e$ is the usual Euclidean absolute value. From Definition 8, finite rationals of $Q$  get the Euclidean value, while $|.|$, on the arbitrarily small and large values, must be related to the $p-$adic valuation. Consequently, the set of scales are represented by the  primes $\delta=p^{-1}$. 

To construct the completion of $Q$ under $||.||$ we first consider the ring $\cal S$  of all sequences of $Q$. The zero divisors in $\cal S$ are removed by the choice of an ultrafilter, as in the usual nonstandard models of $R$ (c.f. Sec.3.1) (see \cite{nsq} for a recent work on  nonstandard extensions of Q)\footnote{The model we consider here is a subset of Robinson's model which deals with all possible sequences of real numbers. The set $\cal S$ is a ring under the formal sum and product of sequences defined by $\{x_n\}+\{y_n\}=\{x_n+y_n\}$ and $\{x_n\}\{y_n\}=\{x_ny_n\}, \ x_n,y_n\in Q$.}. The quotient set of the Cauchy sequences $\cal C (\subset \cal S)$,  under the usual absolute value, modulo the maximal ideal $\cal N$ consisting of sequences converging to 0, gives rise to the ordinary real number set $R ={\cal C}/\cal N$. The elements of diverging sequences in ${\cal S}_{\rm div}={\cal S}{\backslash}\cal C$ correspond to the infinitely large elements, when the inverse $\{a_n^{-1}\}\in {\cal S}_{\rm inv}$ of a divergent sequence $\{a_n\}$ leads to an infinitesimal in the conventional approaches of nonstandard analysis.   In our approach, this realization is, however, somewhat  reversed.

Notice that the set of divergent sequences ${\cal S}_{div}$ is  quite a large set. Now among the all possible divergent sequences, there exists a subset ${\cal S}_p$ of sequences which are nevertheless $p$(-adically) convergent (${\cal S}_{p}\subset{\cal S}_{div} $, since the sequence $\{n\}$ is $p$-adically divergent for each $p$). For each fixed $p$, let us consider the Cauchy completion of $p$ convergent sequences $\{a^p_n\}$ (say) (modulo the  sequences $p$ converging to zero), viz., the local field $Q_p$. We identify, by definition, the $p$-adic integers $\tau\in Z_p\subset Q_p$ with $|\tau|_p\leq 1$ as the $p$ infinitesimals (c.f. Example 4). On the other hand, the elements $\tilde \tau$ of $Q_p$ with $|\tilde \tau|_p>1$ are identified with infinitely large numbers of order $p$. In other words, $\tilde \tau$ denotes the $p$ limit of an inverse sequence of the form $\{(a^p_n)^{-1}\}$, leading to the inversion symmetry $\tilde \tau=\tau^{-1}$ which is valid for a suitable $p$ infinitesimal $\tau$.
The absolute value $||.||$ when restricted to ${\cal S}_p$ thus relates an  infinitesimal $\tau$ an element of $\tilde R$, a nonstandard model of $R$, to a countable number of $p-$adic realizations $\tau_p\in Z_p$ with valuations $||\tau_p||=\mu_p|\tau_p|_p , \ p=2,3,\ldots,  \ p\neq \infty$, $\mu_p$ being a constant for each $p$, as the neighbourhood of 0 in $R$ is probed deeper and deeper by letting $\delta=p^{-1}\rightarrow 0$ as $p\rightarrow \infty$( c.f Example 4). In the computational model (Example 3), this might be interpreted as (inequivalent classes of ) higher precision models of a computation. Consequently, equipped with $||.||$, the set ${\cal S}_p$ decomposes into (a countably infinite Cartesian product of) local  fields $Q_p$ in a {\em hierarchical} sense as detailed in the Lemma 5. We note that any element $\tau$ of ${\cal S}_p$ is an equivalence class of sequences of rational numbers under the chosen ultrafilter. In each of such  a class there exists  a unique sequence $\{a_n^p\}$, say, converging to a $p-$adic integer or its inverse $\tau_p$. A scale free infinitesimal  $\tau$ then relates to  $\tau_p$, and that $\tau$ indeed is an infinitesimal tells that  $||\tau||=\mu_p|\tau_p|_p\leq 1$. We say that 0 of $R$ is probed at the  depth of the (secondary) scale $1/p$  when a scale free ($\delta$) infinitesimal $\tau$ is related to a  $p-$adic infinitesimal $\tau_p$. We, henceforth, denote infinitesimals, as usual, by $\bf 0$, when $p$  infinitesimals are denoted as ${\bf 0}_p$. Thus, ${\bf 0}\equiv{\bf 0}_p$ (that is, $\bf 0$ is identified with ${\bf 0}_p$) at the (secondary) scale $\tilde \delta=1/p$.

\begin{example}
Let $a_{pn}=1+\sum_1^n\alpha_ip^i$, where $\alpha_i$ assumes values from 0,1, ...$(p-1)$. The sequence $a_{pn}$ is divergent in $R$ for each prime $p$. In the nonstandard set $\tilde R$, $\{a_{pn}\}$ denote a distinct infinitely large number for each $p$. $p-$adically, however, $a_{pn}$ converges to the unity $\tau_{pu}$ ($|\tau_{pu}|_p=1$). The scale free unity $\tau_u\in {\cal S}_p$ now denotes the {\em larger} sequence $\{a_{pn},\ \forall p\}$. At the level of secondary scale $\tilde \delta =1/p$, unity $\tau_u$ is realized as  $\tau_{pu}$. 
\end{example}

\begin{remark}{\rm 
By `hierarchical' we mean that as a scale free real variable $\tilde t=t/\delta^n, \ n\rightarrow \infty$ approaches 0 from the initial value 1 through the secondary scales $1/p, \ p\rightarrow \infty$, the ordinary real variable $t\in R$ would experience changes  over  various local fields $Q_p$ successively by inversions.}
\end{remark}

\begin{lemma}
Let $\tau_p\in Z_p$ and $\tau_q\in Z_q$, $q$ being the immediate successor of the prime $p$. Then an infinitesimal $\tau\in \bf 0$ when realized as a $(p)$  infinitesimal has the  representation
\begin{equation}\label{padic}
\tau=\tau_p(1+\tau_q)
\end{equation}
\noindent Further, a $(p)$ unit is given as $\tau_{pu}=1+\tau_p, \ |\tau_{p}|<1$. 
\end{lemma}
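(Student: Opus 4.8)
The plan is to deduce eq(\ref{padic}) directly from the canonical decomposition of Lemma 4, applied to the pair of secondary scales $1/p$ and $1/q$, and then to translate the two resulting factors into $p$-adic and $q$-adic data through the identifications set up in Section 3.2; the final assertion on $(p)$ units will drop out of Lemma 3. Concretely, since $q$ is the immediate successor of the prime $p$ we have $p<q$, hence the associated secondary scales satisfy $\delta_1=1/p>1/q=\delta_2$. Realizing the given infinitesimal $\tau\in{\bf 0}$ as a $(p)$ infinitesimal $\tau_1$ and as a $(q)$ infinitesimal $\tau_2$ at the finer scale, Lemma 4 produces the canonical decomposition $\tau=\tilde\tau_1(1+\tau_2)$ with $|\tau|=|\tilde\tau_1|<1$. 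In particular $|1+\tau_2|=|\tau|/|\tilde\tau_1|=1$ by multiplicativity of $|.|$ (Proposition 2), so $1+\tau_2$ is a $(q)$ unit, and Lemma 3 then forces $|\tau_2|<1$, i.e. $\tau_2$ is a $(q)$ integer.

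Next I would read the two factors $p$-adically and $q$-adically. By the correspondence between ${\cal S}_p$ and the product of local fields $Q_r$, together with the relation $||\tau_r||=\mu_r|\tau_r|_r$ established in Section 3.2, the factor $\tilde\tau_1$ — the part of $\tau$ that is already determined when the origin of $R$ is probed down only to depth $1/p$ — is the $p$-adic realization of $\tau$ and hence corresponds to a $p$-adic integer $\tau_p\in Z_p$ with $|\tau_p|_p<1$ (consistent with $|\tilde\tau_1|<1$); likewise the $(q)$ integer $\tau_2$ is realized as an element of $Z_q$, which I rename $\tau_q$. Substituting gives $\tau=\tau_p(1+\tau_q)$, which is eq(\ref{padic}). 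For the last statement, a $(p)$ unit $\tau_{pu}$ is, by Definition 6, an infinitesimal realized at scale $1/p$ with $|\tau_{pu}|=1$; Lemma 3 applied in this realization writes $\tau_{pu}=1+\tau$ with $|\tau|<1$, and this $\tau$ is exactly the announced $\tau_p\in Z_p$.

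The step I expect to be the main obstacle is the rigorous identification of the Lemma 4 factor $\tilde\tau_1$ with a bona fide element of $Z_p$, together with the correlative claim that the residue $1+\tau_2$ carries all of the structure of $\tau$ below scale $1/p$ at the single immediate successor level $q$, rather than being spread simultaneously over all finer scales. This is precisely the \emph{hierarchical} content of the decomposition of ${\cal S}_p$ alluded to after Example 5: one must check that the rescaling $\xi=\tilde\xi\,\log\delta_2/\log\delta_1$ used in the proof of Lemma 4 strips off exactly one $p$-adic layer, leaving a unit whose valuation data is carried entirely at level $q$; making this precise amounts to tracking the valuation $v$ and the auxiliary choice $\sigma(\delta)=\delta$ through the limit $n\to\infty$ in eq(\ref{value1}).
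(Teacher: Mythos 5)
Your proposal follows essentially the same route as the paper, whose entire proof reads ``Let us fix the scale at $\delta=1/p^n$. Then the proof follows from Lemma 3 and 4'' --- you simply spell out the application of Lemma 4 to the scales $1/p>1/q$ and of Lemma 3 to the unit factor, which is exactly what the paper intends. The obstacle you flag (rigorously identifying the factor $\tilde\tau_1$ with an element of $Z_p$ and confining the residue to the single level $q$) is indeed left unaddressed by the paper's one-line proof as well.
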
 

\begin{proof}
Let us fix the scale at $\delta=1/p^n$. Then the proof follows from Lemma 3 and 4.
\end{proof}
\begin{corollary}
One also has the following adelic extension
\begin{equation}\label{adel}
\tau=\tau_p\underset{q>p}{\prod}(1+\tau_q)
\end{equation}
\noindent where the product is over all the primes $q$ greater than $p$.
\end{corollary}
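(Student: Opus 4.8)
The plan is to obtain the adelic formula (\ref{adel}) by iterating the two-prime decomposition (\ref{padic}) of Lemma~5 along the ordered sequence of primes $p < q < q' < \dots$. First I would fix the scale at $\delta = 1/p^n$ and apply Lemma~5 to write $\tau = \tau_p(1+\tau_q)$, where $\tau_p\in Z_p$ is an honest $p$-infinitesimal and $(1+\tau_q)$ is a $q$-unit with $|\tau_q|<1$. The factor $(1+\tau_q)$ is itself an element of $\mathbf{0}$ realized (at the finer secondary scale $1/q$) as a $q$-infinitesimal-type object, so the content of Lemma~5 applies again to the $q$-level: the ``real variable'' at the $q$-stage is what was an infinitesimal at the $p$-stage (this is exactly the scale-dependent ordering among infinitesimals noted in Remark~7). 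Iterating, at the next step one writes $1+\tau_q = (1+\tau_q)(1+\tau_{q'})$ up to the induced identifications, and more generally each unit factor splits off the next prime in the hierarchy.

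The key steps, in order, are: (i) interpret the left-hand $\tau$ as an element of the hierarchical product $\mathcal{S}_p \cong \prod_{q\ge p} Q_q$, so that $\tau$ is specified by its coordinate sequence $(\tau_q)_{q\ge p}$ with $\tau_q\in Z_q$; (ii) use Lemma~5 repeatedly to peel off one prime at a time, obtaining after $k$ steps the finite partial product $\tau = \tau_p (1+\tau_q)(1+\tau_{q'})\cdots(1+\tau^{(k)})\,\rho_k$ with a remainder $\rho_k$ that is a unit at all primes $\le$ the $k$-th one; (iii) show the remainder converges to $1$ as $k\to\infty$ in the nonarchimedean absolute value $||.||$, using that the realizations $\tilde t_n/\delta^{2n}\to 1$ (Lemma~3) and that $\mathbf{R}$ is complete (Proposition~7), so the infinite product $\prod_{q>p}(1+\tau_q)$ converges; (iv) conclude $\tau = \tau_p \prod_{q>p}(1+\tau_q)$. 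Convergence of the infinite product is legitimate precisely because in an ultrametric setting $\prod(1+\tau_q)$ converges as soon as $||\tau_q||\to 0$, and the hierarchical passage $\delta = 1/p \to 0$ forces exactly this.

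The main obstacle I anticipate is step (iii): making rigorous the claim that the tail remainder $\rho_k$ is genuinely trivial in the limit, i.e.\ that ``peeling off primes forever'' leaves nothing behind rather than some residual unit. This requires care because each application of Lemma~5 is performed at a different (finer) scale, and one must check that the identifications $\mathbf{0}\equiv\mathbf{0}_p$ at successive secondary scales $1/p$ are compatible so that the partial products live in a single space where the limit makes sense — essentially a cofinality/consistency argument for the directed family of scales $\{1/p\}$, together with the observation (Example~12) that the scale-free unity $\tau_u$ is the coherent sequence $\{a_{pn},\ \forall p\}$ whose every coordinate is a $p$-unit. Granting that compatibility (which is built into the construction of $\mathcal{S}_p$ as a Cartesian product of the $Q_q$), the corollary follows by induction on the prime index together with the completeness of $\mathbf{R}$.
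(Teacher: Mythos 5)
Your route is sound in spirit but it is not the one the paper takes, and the difference is worth noting. You build the product iteratively: apply Lemma 5 to peel off $\tau_p(1+\tau_q)$, reapply it at the next secondary scale to split off $(1+\tau_{q'})$, and so on, leaving after $k$ steps a finite partial product times a remainder $\rho_k$ whose triviality in the limit you must then establish via ultrametric convergence of infinite products ($\prod(1+\tau_q)$ converges once $||\tau_q||\to 0$) together with completeness. The paper instead produces the whole product in a single step: it extends the valuation formula (\ref{value1}) to
\begin{equation*}
\frac{\tau}{\delta^n}=(\delta^n)^{\left(|\tau_i| +\sum_{m}\xi_m(\tau, \delta_m)\right)},
\end{equation*}
with one indeterminate function $\xi_m$ for each prime $q>p$, each satisfying the conditions of (\ref{value2}) and with $\xi_q$ vanishing faster than $\xi_p$ for $q>p$; exponentiating, each summand becomes a unit factor $(1+\tau_q)$ by Lemma 3, so the adelic product (\ref{adel}) drops out of the sum in the exponent without any induction or remainder estimate. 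What your approach buys is an explicit confrontation with the convergence question you flag in step (iii) --- the paper's proof absorbs that issue into the asserted decay hierarchy of the $\xi_m$, so your version makes visible an assumption the paper leaves implicit. What the paper's approach buys is that all prime levels are treated simultaneously at the single scale $\delta=1/p$, which sidesteps the cross-scale compatibility problem (identifying $\mathbf{0}\equiv\mathbf{0}_q$ at each successive secondary scale) that your iteration genuinely needs and that you correctly identify as the weak point. One small correction: the coherent sequence $\{a_{pn},\ \forall p\}$ realizing the scale-free unity is Example 5 of the paper, not Example 12.
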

\begin{proof}
This follows from the above lemma when the valuation formula (\ref{value1}) is extended further
\begin{equation}\label{valueadel}
\frac{\tau}{\delta^n}=(\delta^n)^{(|\tau_i| +\underset{m}{\sum}\xi_m(\tau, \delta_m))}
\end{equation}
\noindent where $|\tau|=|\tau_i|$, $\delta_m^{-1}, \ m>1$ primes greater than $p$, $\delta=1/p$ and each of the indeterminate functions $\xi_m$  satisfies conditions analogous to that in (\ref{value2}). Further, $\xi_q$ goes to zero faster than $\xi_p$ if $q>p$.
\end{proof}

Collecting together the above results, we have

\begin{theorem}
 The completion of $Q$ under the absolute value $||.||$ yields a countable number of complete scale free models ${\bf R}_p$ of $R$, such that each element ${\bf t}\in {\bf R}_p$ has the form ${\bf t}_p=t+\tau_p\underset{q>p}{\prod}(1+\tau_q), \ t\in R, \ \tau_p\in Q_p$, where $\tau_p$ is given by the asymptotic expression  $\tau_p=(p^{-n})^{(1+|\tau_p|_p(1+\sigma(\eta)))}$ where $\eta=O(\delta)$ is a real variable. Finally, ${\bf R}_p$ locally has the Cartesian product form ${\bf R}_p=R\times Q_p\times\underset{q>p}{\prod} Q_q$.
 \end{theorem}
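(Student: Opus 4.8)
The plan is to assemble the theorem as a bookkeeping consequence of the constructions already in place: the non-archimedean absolute value $||.||$ of Definition 8 (Corollary 1), the Ostrowski classification, and the canonical decomposition lemmas. First I would form the ring $\cal S$ of sequences of $Q$, remove zero divisors via an ultrafilter, and isolate the subring ${\cal S}_p$ of $p$-adically convergent (though real-divergent) sequences, exactly as in the paragraph preceding Example 5. Restricting $||.||$ to ${\cal S}_p$ and passing to the Cauchy completion modulo the $p$-null ideal recovers, for the finite part, the ordinary reals $R={\cal C}/{\cal N}$, while the infinitesimal/infinite part is governed by the $p$-adic valuation $|.|_p$ — this is where Ostrowski's theorem forces the secondary scale to be $\delta=1/p$. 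Since this must hold for every prime $p$, one obtains a countable family ${\bf R}_p$ rather than a single extension.

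Next I would establish the additive/product structure of a general element. Writing ${\bf t}={\bf t}_p = t + \tau$ with $t\in R$ the finite (Euclidean) part and $\tau\in{\bf 0}\equiv{\bf 0}_p$ an infinitesimal, the representation $\tau=\tau_p\prod_{q>p}(1+\tau_q)$ is precisely Corollary 2 (the adelic extension), which in turn rests on Lemma 5 and its iteration; the unit factorization $\tau_{pu}=1+\tau_p$ with $|\tau_p|<1$ is Lemma 3 in its $p$-adic guise. For the asymptotic formula for $\tau_p$, I would invoke the extended valuation equation (\ref{value1}): with scale $\delta=1/p$ and $n\to\infty$ realizations, $\tilde t_n/\delta^n=(\delta^n)^{\sigma^{v(\tau_p)}+\xi}$ specializes, on setting $\sigma(\delta)=\delta$ and using $|\tau_p|_p=\delta^{v(\tau_p)}$ together with the residual factor $\mu_p=1+\sigma(\eta)$ from Remark 11 (the relationship between $|\tau|$ and $|\tau|_\delta$), to $\tau_p=(p^{-n})^{(1+|\tau_p|_p(1+\sigma(\eta)))}$, where $\eta=O(\delta)$ is the real control variable. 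The $\xi$-term is designed to satisfy $(\delta^n)^{\xi}\to 1$, so it drops out in the limit and only reshapes the exponent.

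Finally, the local Cartesian product form ${\bf R}_p = R\times Q_q\times\prod_{q>p}Q_q$ follows by reading the decomposition $\tau=\tau_p\prod_{q>p}(1+\tau_q)$ coordinatewise: the finite part lives in $R$, the leading $p$-adic piece $\tau_p$ in $Q_p$, and each correction factor $1+\tau_q$ contributes a $Q_q$-coordinate for $q>p$, with $\xi_q$ tending to zero faster than $\xi_p$ (the last clause of the proof of Corollary 2) guaranteeing that the product converges in the hierarchical sense and that the factorization is canonical. Local compactness and completeness of each ${\bf R}_p$ come from Propositions 4 and 5 restricted to the $p$-adic fibre, since each $Q_q$ is locally compact and the infinitesimal part sits in a (compact) Cantor set by Lemma 2.

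I expect the main obstacle to be not any single calculation but the \emph{coherence} of the whole limiting scheme: one must check that the double limit — $n\to\infty$ at fixed secondary scale $\delta=1/p$, then $p\to\infty$ through the primes — is consistent, that the ultrafilter-quotient on ${\cal S}_p$ genuinely produces a field (not merely a ring) so that inversions $\tilde\tau=\tau^{-1}$ are legitimate, and that the infinite product $\prod_{q>p}(1+\tau_q)$ is well-defined in the completion. These are the points where the informal "hierarchical" language of the preceding remarks has to be pinned down; the algebraic identities themselves are immediate from Lemmas 3--5 and Corollaries 1--2.
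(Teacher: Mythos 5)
Your proposal follows essentially the same route as the paper: the theorem is assembled from the preceding machinery (Ostrowski forcing $\delta=1/p$, Lemmas 3--5 and Corollary 2 for the adelic decomposition $\tau=\tau_p\prod_{q>p}(1+\tau_q)$, the valuation equation for the asymptotic form of $\tau_p$), with completeness being the only point argued directly --- the paper disposes of it in one line by noting a Cauchy sequence in ${\bf R}_p$ is Cauchy either $p$-adically or in the usual metric, and credits the asymptotic representation to the earlier examples rather than rederiving it. Your version is a more detailed and more careful reconstruction of the same argument (and the coherence issues you flag at the end are real and are left unaddressed in the paper's own proof), but it is not a different approach.
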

 
\begin{proof}
The only missing element is the completeness. Let us first fix a scale $\delta=1/p^r$. Let $\{a_n\}$ be a Cauchy sequence in ${\bf R}_p$.  Then it is Cauchy either in $p-$adic metric or in the usual metric, finishing the proof of the first part. The asymptotic representation follows from Examples 1 and 3. 
\end{proof}

In the following section we discuss the nature of influences that the scale free nonarchimedean extensions of $R$ would have on the basic structure of $R$ itself. 

\section{Dynamical Properties} 

To recapitulate, we note that the set $R$, in the presence of nontrivial scales, proliferates into the above $p-$adically induced  extensions ${\bf R}_p$.  We now investigate the converse question, ``How do these field extensions  influence the standard asymptotic behaviours in $R$?"  Let us recall the standard asymptotic behaviour of an ordinary real variable $t$ as it approaches 0 is that $t$ vanishes {\it linearly} as $t \rightarrow 0$ (at the uniform rate 1). In the presence of nontrivial scales the situation is altered {\em significantly}. The point 0 is now identified with the set $\tilde I_{\delta}= [-\delta,\delta], \ \delta=1/p^r$, for some $r>0$ inhabiting infinitesimals $\tilde t$ as in Definition 1.  Corresponding to these infinitesimals there exist scale free $(p)$ infinitesimals $\tau_p =\lim \tilde t_n/\delta^n, \ n\rightarrow \infty$ with absolute values of the form $||\tau_p||= |\tau_p|=|\tau|_p(1+\sigma(\eta)) $. Here, $\eta$ is defined by the real variable $t=\delta (1+\eta)$ approaching $0^+$, viz., $\delta$ from the right. In the ordinary real analysis, $\delta=0$ and the limiting value of $t$, viz., 0 is attained {\em exactly}. For a nonzero $\delta$, this exact value is attained by the rescaled variable $t_1=t/\delta$, although the value attained is now 1 instead, i.e., $t_1=t/\delta =1$. We are thus still in the framework of the real analysis (and computational models based on this analysis). The presence of infinitesimals of the form $\tilde t$ (in the conventional sense) does not appear to induce any new structure beyond that already existing in the system. The definition of scale free infinitesimals and the associated absolute values now provide us with new inputs. 

\begin{definition}
The scale free ($p$) infinitesimals, in association with absolute values $|.|$, are called valued (scale free) infinitesimals. These infinitesimals are also said to be ``active" (directed),  when infinitesimals of conventional nonstandard models are inactive (or passive, non-directed).
\end{definition}

\begin{remark}{
Because of scale free infinitesimals, the exact equality of $R$ in the ordinary analysis is replaced by an approximate equality, for instance, the equality $t=1$ is now reinterpreted as $t=O(1)$. In an associated nonarchimedean realization ${\bf R}_p$ the exact equality is again realized, albeit, in the ultrametric absolute value viz., $||t||=1$. Further, the rescalings defined by the inversion rule in Definition 1 accommodates the residual rescalings of Sec.2 provided the absolute value of a valued infinitesimal $\tau_p$ at the scale $\delta=p^{-n}$ is given by $|\tau_p|=p^{-n}$.}
\end{remark}  

 {\bf Proof and Explanations}:
We consider infinitesimals as defined by $0<\tilde{\tilde t}<\epsilon<\tilde t<\delta<t$ ($\epsilon$ is determined shortly). We show that valued infinitesimals from $(0,\epsilon]$ would affect the ordinary value of $t$ nontrivially. Infinitesimals in $(\epsilon, \delta)$ are (relatively) passive. As stated above, in ordinary analysis, the limit $t\rightarrow 0^+$ in the presence of a scale is evaluated exactly viz., $t_1=1$ i.e., $\log t_1=0=O(\delta)$. Infinitesimals, in conventional scenario, are passive in the sense that their values remain always infinitesimally small in any linear process (dynamical problem). In the present case, however, the numerically small (in the ordinary Euclidean sense) infinitesimals lying closer to 0, are {\em dominantly valued}, so as to induce an influence over a finite real variable (numbers). This is because of the definitions  of valued infinitesimals. Indeed, we {\em reinterpret}  the valuation structure (as defined in Definitions 1-3), in the context of ordinary analysis, as one admitting an extension of the ordinary (positive) real line $(\delta,\infty)$  over $(\epsilon,\delta]\cup (\delta, \infty)$ so that ordinary zero is now identified as $[0,\epsilon],\ \epsilon = t_1^{-1}. \delta\log \delta^{-1}$ instead. As a consequence, we shall now have  $\log t_1=O(\epsilon)$.  In the language of a {\em  computational model, the accuracy of the model is increased to the level denoted by $\epsilon$}. It now follows from Remark 7, that $t.\log_{\delta^{-1}}(\lambda\delta/\tilde t)= t.|\tau_r|_p (1+\sigma_p(\eta))=O(\delta^2)$.  This follows from the inversion law (Definition 1) and the fact that $|\tilde t|\sim O(\delta)$ (Remark 1). Thus, fixing $r=n$, so that $|\tau_n|_p=p^{-n}=\delta$, we get the first correction  
\begin{equation}\label{coreqn}
(1+\eta)(1+\sigma_p(\eta))=O(1)
\end{equation}

\noindent to the value of $t_1=1+\eta$ from  $(p)$ infinitesimals, even for a fixed value of $\eta$. We note that $\tilde t_p=1+\sigma_p(\eta)$ and $\sigma_p >0$ must be of higher degree in the real variable $\eta$ as $\tilde t_p\in R_p$. Eq(\ref{coreqn}) is interpreted as one encoding the influence of the (first order) $(p)$ infinitesimals. Taking into account successively the higher order $(p)$ infinitesimals, and iterating the above steps on each rescaled variables $\tilde t_p=O(1)$, one obtains $\log(1+\sigma_p (\eta))=O(\tilde \epsilon)$, where $\tilde\epsilon = t_2^{-1}. \tilde\delta\log \tilde\delta^{-1}, \ \tilde\delta=q^{-n}$, $q$ being the immediate successor to the prime $p$, and so on , so that we get finally an extended version of the equality $t_1=O(1)\in R$ in the set $\bf R$ as
\begin{equation}\label{nd}
T(\eta):=(1+\eta)\underset{q\geq p}\prod(1+\sigma_q(\eta))(=O(1))
\end{equation}
\noindent The variable $t\in R$ is thus replaced by the modified variable $T\in \bf R$ and hence, in this extended framework, a solution of 

\begin{equation}\label{sfe3}
 t{\frac{{\rm d}\tilde t}{{\rm d}t}}=-\tilde t
\end{equation}

\noindent is written, for a $t>1$ as $0<\tilde t(t)=(T(\eta))^{-1} \ <1$, which belongs clearly  to the class of nonsmooth solutions of Sec.2. Here, $\sigma_p$'s take care of the residual rescalings,  and thereby introduce small scale variations in the value of $\eta$. These also  relate  to the size of the undeleted closed intervals at the $n$th iteration of the Cantor set accommodating $(p)$ infinitesimals. This explains the {\em loss of exact equality} in the present extension of $R$. We note that the ordinary real number set $R$ is recovered in the limit when the said closed intervals reduce to a singleton (Example 2). As a consequence, $\mu=1$, leading to $\sigma_p(\eta)=0$, thus retrieving the exact equality of $R$. For a nontrivial $\mu\neq 1$, one, however, verifies that $\sigma_p(\eta)=\alpha_p(\eta_{\tilde p}-\epsilon_p/\alpha_p)^2, \ \tilde p$ being the preceding prime, when eq(\ref{sfe3}) is solved recursively as in Sec. 2.}

\begin{remark}{\rm
It is important to note that the above derivation is performed purely in the ordinary analysis, except for the fact that we make use of the {\em special representations} of $t$ and $\tilde t$ as induced from the valued infinitesimals. Consequently, 
(i) the transitions between real and infinitesimals are interpreted as being facilitated by  inversions ( for instance, either as $t_-\longmapsto t_-^{-1}=\tilde t_+$, or as $t_+\longmapsto \tilde t_-= t_+^{-1},$  as the case may be) as oppose to linear shifts (remark 5) and (ii) the nonarchimedean $p-$adic absolute value $|\tau_r|_p$  generates a scale factor in the smaller scale logarithmic variables. In fact, this correspondence could be made more precise. 
\begin{proposition}
{\rm (Page 14, Ref\cite{na3}}) Let $ \tau_p\in Q_p, \ {\rm so \ that} \  \tau_p=p^r (1+\sum_1^{\infty} a_ip^i)$, where $a_i$ assumes values from $0,1,2,\ldots (p-1)$ and $r\in Z$. Then there exists a one to one continuous mapping $\phi: \ Q_p\rightarrow R_+$ given by $\phi(\tau_p)=p^{-r}(1+\sum a_ip^{-2i})$.
\end{proposition}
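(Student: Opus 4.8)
The plan is to verify, in turn, that $\phi$ is well defined, injective and continuous, the entire argument hinging on the single elementary estimate
\[
\sum_{i\ge 1}(p-1)p^{-2i}=\frac{p-1}{p^{2}-1}=\frac{1}{p+1}<1 .
\]
This is precisely why the exponent is doubled to $p^{-2i}$ rather than left as $p^{-i}$: it makes the digit series $\sum_{i}a_{i}p^{-2i}$, with $a_{i}\in\{0,\dots,p-1\}$, behave like a base-$p^{2}$ expansion with \emph{sparse} digits, so that no carrying or collision of values can occur. First I would record well-definedness: the $p$-adic expansion of any $\tau_{p}\in Q_{p}\setminus\{0\}$ determines $r\in Z$ and the digits $a_{i}$ uniquely, so the formula for $\phi(\tau_{p})$ is unambiguous; moreover $0\le\sum_{i\ge1}a_{i}p^{-2i}\le\frac1{p+1}<1$, whence $\phi(\tau_{p})\in[p^{-r},2p^{-r})\subset[p^{-r},p^{-r+1})$ and in particular $\phi(\tau_{p})>0$. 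Setting $\phi(0)=0$ extends $\phi$ to all of $Q_{p}$.

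For injectivity, suppose $\phi(\tau_{p})=\phi(\tau_{p}')$, with valuations $r,r'$ and digits $a_{i},a_{i}'$. Since the value lies in $[p^{-r},p^{-r+1})$ and the family $\{[p^{-r},p^{-r+1}):r\in Z\}$ partitions $R_{+}$, we get $r=r'$; cancelling $p^{-r}$ leaves $\sum_{i\ge1}(a_{i}-a_{i}')p^{-2i}=0$. If the digit strings were not identical, pick the least $j$ with $a_{j}\ne a_{j}'$, say $a_{j}-a_{j}'\ge1$. Then $(a_{j}-a_{j}')p^{-2j}\ge p^{-2j}$, while the remaining terms have absolute value at most $(p-1)\sum_{i>j}p^{-2i}=\tfrac1{p+1}p^{-2j}<p^{-2j}$, contradicting $\sum_{i\ge j}(a_{i}-a_{i}')p^{-2i}=0$. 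Hence all digits coincide and $\tau_{p}=\tau_{p}'$.

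For continuity, fix $\tau_{p}\ne 0$ with valuation $r$ and take $\tau_{p}'$ with $|\tau_{p}-\tau_{p}'|_{p}\le p^{-N}$, $N>r$. By the non-archimedean property $\tau_{p}'$ also has valuation $r$, and $\tau_{p}\equiv\tau_{p}'\pmod{p^{N}}$ forces $a_{i}=a_{i}'$ for $1\le i\le N-r-1$; therefore
\[
|\phi(\tau_{p})-\phi(\tau_{p}')|=p^{-r}\Big|\sum_{i\ge N-r}(a_{i}-a_{i}')p^{-2i}\Big|\le p^{-r}(p-1)\sum_{i\ge N-r}p^{-2i},
\]
which tends to $0$ as $N\to\infty$; so $\phi$ is continuous at $\tau_{p}$. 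At $0$, $p$-adic convergence $\tau_{p}'\to 0$ means $v(\tau_{p}')\to+\infty$, so $0<\phi(\tau_{p}')<2\,p^{-v(\tau_{p}')}\to 0=\phi(0)$, giving continuity there too. (A little more work shows $\phi$ is in fact a homeomorphism onto its image, which is a measure-zero Cantor-type subset of $R_{+}$; but only the stated "one-to-one, continuous" is needed here.)

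The only delicate bookkeeping is aligning the "order of agreement" of two $p$-adic expansions with the index at which the real series $\sum a_{i}p^{-2i}$ first differs, used in both the injectivity and the continuity steps; once the uniform tail bound $\tfrac1{p+1}<1$ is in place there is no genuine obstacle, and everything else is routine geometric-series estimation.
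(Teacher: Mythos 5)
Your argument is correct. Note, however, that the paper does not actually prove this proposition: it is stated as a quoted result with a bare citation to page 14 of Vladimirov--Volovich--Zelenov, and no argument appears in the text. Your write-up therefore does not diverge from an existing proof so much as supply the missing one, and the three steps (well-definedness, injectivity, continuity) all rest correctly on the single tail estimate $\sum_{i\ge1}(p-1)p^{-2i}=\tfrac{1}{p+1}<1$, which is indeed the whole reason the exponent is doubled. One caveat concerns the statement rather than your proof: the normal form $\tau_p=p^{r}(1+\sum_{1}^{\infty}a_ip^{i})$ represents only those $\tau_p$ whose leading $p$-adic digit equals $1$; a general nonzero element has expansion $\tau_p=p^{r}(a_0+\sum_{1}^{\infty}a_ip^{i})$ with $a_0\in\{1,\dots,p-1\}$, and the map should correspondingly read $\phi(\tau_p)=p^{-r}(a_0+\sum a_ip^{-2i})$. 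Your proof goes through verbatim in that generality, since one still has $\phi(\tau_p)\in[a_0p^{-r},(a_0+1)p^{-r})\subset[p^{-r},p^{-r+1})$, so $r$ and then $a_0$ are recoverable from the value and the digit-comparison argument proceeds unchanged. Finally, as you observe, $\phi$ is injective and continuous but not onto $R_+$; its image is precisely the measure-zero Cantor-type set $C_p=\phi(Q_p)$ that the paper invokes in the sentence immediately following the proposition, so your closing parenthetical is in fact the property the surrounding text relies on.
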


Let us denote $C_p=\phi(Q_p)$. It then follows that any bounded subset of $C_p$ is a zero (Lebesgue ) measure Cantor  set in $R$ (c.f. Example 3).
Accordingly, the treatments of ordinary analysis can be extended in a scale free manner (though remaining in the folds of the usual topology of $R$) over a more general metric space $\Re$ accommodating the above new structures. In view of Theorem 1, $\Re$ is locally (at the neighbourhood of a point $t=t_0 \in R$) a Cartesian product, viz., $\Re=R\times \underset{p}\prod C_{p}$. The product space is interpreted as an hierarchical sense. An ordinary real variable $t$ is extended over $\Re$ as $T=t\prod t_p$ where $t_p=1+\epsilon_p\tau_p$,  $\tau_p\in C_p$ being a scale free $(p)$ infinitesimal and $\epsilon_p\approx \delta p^{-1}\log (p\delta^{-1})$ denotes the enhanced level of accuracy because of valued infinitesimals at the scale $\tilde\delta=\delta p^{-1}, \ \delta\rightarrow 0^+$. Noting that $\epsilon_q/\epsilon_p=p/q\rightarrow 0$, as $\delta\rightarrow 0^+, $ for $q>p$, one may re-express  $T$ as $T=t(1+\epsilon \tau)$, where $\epsilon\approx \delta \log \delta^{-1}$ when the scale is identified with $\delta=2^{-(n-1)}$ so that $p=2$, and the scale free infinitesimal $\tau$ now resides and varies in $\prod C_p$ in an orderly (hierarchical)  manner as detailed below, as $t\rightarrow 0^+ =O(\delta)$. 
}
\end{remark}
Let us first recall (again) that the concept of relative infinitesimals is introduced originally in the context of a Cantor set \cite{sd1,sd2}. Because of these relative infinitesimals each element of the Cantor set $C_p$ is effectively identified with a closed  interval of $R_p$ (a generator of a defining IFS) at every level of the scale $\delta\rightarrow 0$. In the presence of infinitesimals a Cantor set is thus realized as $R_p$ itself except for the fact the motion in $C_p$ is now visualized as a combination of linear shift (along the closed line segment) together with an inversion in the vicinity of the Cantor point itself. As a consequence, the generalized, inversion mediated metric space $\Re$ is represented henceforth as $\Re=R\times\prod R_p$. A generalized motion in $\Re$ now is represented as follows.

\begin{definition}
The set $\Re$ is interpreted as having several branches $R$ and $R_p, \ p$ being a prime. The branches $R_p$ accommodating scale free infinitesimals and infinities are thought to be knotted at (the scale free unity) 1. A real variable $t\in R$ approaching 0=$O(\delta)$ from 1(say) is replaced by the scale free variable $t_1=t/\delta$. For simplicity of notation we continue to denote $t_1$ by $t$ and call it a scale free variable instead (which should become clear from the particular context). So, as the scale free $t\rightarrow 1^+$, the unique linear motion is replaced by two inversion mediated modes: (i) {\em Local or vertical mode}: $t_+$ is replaced by $t_+\longmapsto t_+^{-1}=\tilde t_-$ which takes note of the localized effects of infinities on an infinitesimal and (ii) {\em global or horizontal (growing) mode}:  a scale free infinitesimal $0<\eta\in R_p$ grows infinitely slowly following the linear law until it shifts to a O(1) variable living possibly in another branch $R_q$ by inversion $\eta_-\longmapsto \eta_-^{-1}=t_+=1+\tilde \eta$ where $0\lesssim \tilde\eta\in R_q$. 
\end{definition}

\begin{remark}{\rm 
The unidirectional motion of a real variable $t\in R$ approaching $0^+$ thus bifurcates into two possible modes: a variable $t$ approaching $0=O(\delta)$ from above will experience, as it were,  a bounce and so would get replaced by the inverted rescaled infinitesimal variable $\tilde t=\delta/t$ living in a scale free branch $R_p$(say). A fraction of the asymptotic limit  $t\rightarrow 0^+=O(\delta)$ of $R$ (viz., $t\longmapsto t_+=t/\delta=1+\eta, \ \eta\rightarrow 0^+$) is therefore be replaced by a growing mode of the rescaled variable $\tilde t_-^{-1}=t_{p+}=1+ \eta_p, \ t_{p+}, \eta_p\in R_p$ and $ \eta_p\gtrsim 0$ initially but subsequently growing to $\rightarrow 1^-$ in the branch $R_p$. Besides this growing mode, another fraction of the decreasing (decaying) mode of the flow of  $R$, viz., $t\longmapsto t_+=t/\delta=1+\eta, \ \eta\rightarrow 0^+$, is also available as a localized mode in another branch $R_q$ (say) in the form $t_+^{-1}=(1+ \eta_q)^{-1}=\tilde t_{q-}$, where, again $  \eta_q\approx 0$ grows to $\eta_q\rightarrow 1^-$ slowly. As a consequence, the limiting value 1 of the rescaled variable in $R$, now, gets a dynamic (multiplicative) partitioning of the form $\tilde t_{q-}t_{p+}\approx 1 \ \Rightarrow \ (1-\mu(\eta_q)\eta_q)(1+ \eta_p)=O(1), \ 0\lesssim \eta_p\in R_p, \ 0\lesssim \eta_q\in R_q$, which equivalently can also be written more conveniently as  $(1-t^{\eta_q})(1+ \eta_p)=O(1)$ (c.f. Remark 4). The local and global modes therefore appear to induce a competition between the effects generated by infinitesimals and infinities,  leading  to this dynamic partitioning of the unity. The localized factor $(1-t^{\tau})$ arising from active infinitesimals $\tau$ living in a branch of $\Re$ will lead to an asymptotic scaling of any scale free (locally constant) variable $\tilde T=T/t$ in $\Re$ (see Sec.4.3). Accordingly, $d\tilde T/dt=0$ and hence $\tau\in \prod R_p$  satisfies the scale free equation 
\begin{equation}\label{rescaled}
\log t\frac{d \tau}{d\log t}=-\tau
\end{equation}  
}
\end{remark}

Consequently, asymptotic limits either of the forms $t\rightarrow 0^+$ or $t\rightarrow \infty$, in $R$  would {\em ultimately} behave as an unidirectional (monotonically increasing ) variable in $\Re$. Moreover, as $t^{-1}\rightarrow \infty$, the ordinary linear motion of $t\rightarrow 0$ will undergo small scale {\em mutations}, because of zigzag motion of the inverted variables $\tilde t_p$'s ($\sim O(1)$), living successively in the rescaled branches $R_p$ and mapping recursively the smaller and smaller neighbourhoods of 0 closer to 1.  As a consequence, extended real numbers $T$ of $\Re$ are directed, since each of the $(p)$ infinitesimals are, by definition (as it involves a limit ), directed.

\begin{definition}
{\rm Intrinsic (Internal) Time:} A continuous monotonically increasing variable $\tilde t$ living in the product space $\prod R_p$, from the initial value 1, is called an internal evolutionary time. The rate of variations of $\tilde t$ is infinitely small because of the presence of scale factors of the form $\delta p^{-1}, \ \delta\rightarrow 0^+$.

Any variable $\tau\in \prod R_p$ is called {\em dynamic} since it undergoes spontaneous changes (mutations) relative to the (scale free) internal time $\tilde t$.
\end{definition}

With this dynamic interpretation of $\Re$, it now follows that new  solution constructed in equation (\ref{nd}) is indeed smooth in $\Re$ (as it is evident from the derivation). However, because of the presence of the irreducible O(1) correction factors this solution can not be accommodated in the ordinary analysis (i.e., even in $\Re$ ) in an exact sense. In the nonarchimedean extensions ${\bf R}_p$, such a solution is not only admissible and smooth but also exact, in the sense of absolute values, viz.,  $||\tau||=1$, since $||t||=||{\tilde t}_i||=1$ for each $i$, thus retrieving the ordinary equality $|t_1|_e=1$ in the ultrametric  sense.

We restate the above deductions as a  
                                             
\begin{lemma}
The ordinary analysis on $R$ is extended over $\Re$ with new structures as detailed above (Definition 10, Remark 12). In this extended formalism accommodating dynamic (valued)  infinitesimals, $0\in R^+$ (the set of positive reals) is identified with $[0,\epsilon],\ \epsilon = t_1^{-1}.\delta \log\delta^{-1}$, where $t_1=t/\delta$ and $t\rightarrow \delta^+$. As a consequence,   a constant in $R$ becomes a variable over valued infinitesimals of  $\Re$.
\end{lemma}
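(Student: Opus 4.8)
The plan is to assemble the pieces established in the preceding ``Proof and Explanations'' paragraph together with Remarks~11--12 and Definition~10 into the two asserted statements. First I would recall that, once a nontrivial scale $\delta=p^{-r}$ is switched on, a real variable $t$ approaching $0^+=O(\delta)$ is replaced by the rescaled variable $t_1=t/\delta$, for which ordinary analysis attains the limit exactly, $t_1=1$, i.e. $\log t_1=0=O(\delta)$. The crux is Remark~1: Definition~1 already supplies a nonempty class of infinitesimals $\tilde t$ (those with $|\tilde t|_e\le\delta\,\delta^{\delta}$, equivalently $|\tilde t|\ge\delta$) whose \emph{value} is not numerically infinitesimal; these are the ``dominantly valued'' infinitesimals. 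Using the inversion rule $\tilde t/\delta=\lambda(\delta/t)$ of Definition~1 together with Remark~7 one computes $t\cdot\log_{\delta^{-1}}(\lambda\delta/\tilde t)=t\cdot|\tau_r|_p(1+\sigma_p(\eta))=O(\delta^2)$, which, read back in ordinary analysis, says that the effective zero is no longer the point $0$ but the sub-interval $[0,\epsilon]$ with $\epsilon=t_1^{-1}\,\delta\log\delta^{-1}$ --- i.e. the ordinary half-line $(\delta,\infty)$ is extended over $(\epsilon,\delta]\cup(\delta,\infty)$ --- and consequently the exact equality $\log t_1=0$ is relaxed to $\log t_1=O(\epsilon)$. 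This gives the first part of the lemma.

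Next I would promote this to the hierarchical product structure $\Re=R\times\prod_p R_p$ of Definition~10. Writing $t_1=1+\eta$, equation~(\ref{coreqn}) records the first correction $(1+\eta)(1+\sigma_p(\eta))=O(1)$ coming from the $(p)$ infinitesimals at the scale $\delta=p^{-n}$; iterating the same step over the scales $\tilde\delta=q^{-n}$, $q\ge p$, and using that each $\sigma_q$ is of higher degree in the real variable $\eta$ (so the infinite product converges), one arrives at $T(\eta)=(1+\eta)\prod_{q\ge p}(1+\sigma_q(\eta))=O(1)$ as in~(\ref{nd}). Thus the single ordinary value $t_1=1$ is replaced by the whole orbit $\{T(\eta):\eta\in\prod_p R_p\}$, and the same argument applied to $c\cdot t_1$ handles an arbitrary constant $c\in R$. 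Since by Definition~11 and Remark~12 the variable $\tau\in\prod_p R_p$ is dynamic --- it mutates relative to the internal (intrinsic) time, being only locally constant (a ``slowly varying'' function) in $\bf R$ --- the constant $1$ of $R$ has genuinely become a variable over the valued infinitesimals of $\Re$, which is the second part.

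The step I expect to be the main obstacle is not any single computation but the consistency clause: one must verify that the identification $0\equiv[0,\epsilon]$ does not disturb standard results of real analysis (e.g. the Lebesgue measure of $R$). This is secured by housing the valued infinitesimals inside a zero-measure Cantor set, as in Example~3 and Proposition~6 (the one-to-one map $\phi:Q_p\to R_+$, $\phi(\tau_p)=p^{-r}(1+\sum a_ip^{-2i})$, whose bounded image $C_p$ is a Cantor set of Lebesgue measure zero), so that $R_\delta$ --- and with it the extra band $(\epsilon,\delta]$ of realizations --- carries zero measure in $R$. Finally, one checks the degenerate case: when $\mu=1$ the generating closed intervals collapse to singletons (Example~2), $\sigma_q(\eta)\equiv 0$, $T(\eta)\equiv 1$, and the exact equality of $R$ is recovered; for $\mu\neq 1$ one obtains $\sigma_q(\eta)=\alpha_q(\eta_{\tilde q}-\epsilon_q/\alpha_q)^2$ by solving~(\ref{sfe3}) recursively as in Section~2, confirming that the variation is genuine and exactly of the residual-rescaling type of that section.
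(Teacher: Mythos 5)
Your proposal establishes the first claim the same way the paper does --- by recapitulating the ``Proof and Explanations'' paragraph, the inversion rule of Definition~1 and the estimate $t\cdot\log_{\delta^{-1}}(\lambda\delta/\tilde t)=O(\delta^{2})$ that produces $\epsilon=t_1^{-1}\delta\log\delta^{-1}$ --- so that part is essentially identical in substance. For the second claim (``a constant in $R$ becomes a variable over valued infinitesimals of $\Re$'') you take a genuinely different route. The paper's own proof makes this claim precise through a single concrete computation: it shows that the constancy equation $\frac{d\phi}{dt}=0$, valid for finite real $t$, is transformed in the infinitesimal neighbourhood of $\delta$ into the logarithmic equation $\frac{d\log\phi}{d\log t}=O(1)$, using the representations $t=\delta\cdot\delta^{-\tilde t_1}$ and $\phi=\phi_0\,\delta^{k\tilde t_1}$ together with the observation that $dt=\eta=O(\epsilon)$ forces the replacement $d\log_{\delta^{-1}}t=d\tilde t_1=O(\epsilon)$. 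You instead argue via the multiplicative partition $T(\eta)=(1+\eta)\prod_{q\geq p}(1+\sigma_q(\eta))=O(1)$ of eq(\ref{nd}) and the dynamic character of $\tau\in\prod R_p$ from Definition~11 and Remark~12, concluding that the single value $1$ is replaced by an orbit. Both arguments draw on the same preceding material and reach the same conclusion at the paper's level of rigour, but they buy different things: the paper's differential-equation formulation is the version actually reused later (it is the prototype of eq(\ref{rescaled}) and of the scaling analysis in Sec.~4.3), whereas your orbit/product formulation foregrounds the hierarchical structure and the measure-theoretic consistency (zero-measure Cantor sets, Example~3, Proposition~6), which the paper keeps outside the proof proper. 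If you want your write-up to align with how the lemma is invoked downstream, you should add the explicit passage from $\frac{d\phi}{dt}=0$ to $\frac{d\log\phi}{d\log t}=O(1)$; otherwise your argument stands as an acceptable alternative reading of the same material.
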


\begin{proof}
We have already seen in the above that $0\in R^+$ is actually $[0,\epsilon]$ in $\Re$. We justify it further by showing that an equation of the form 
\begin{equation}\label{const}
\frac{d\phi }{dt}=0
\end{equation}%
for finite real values of $t$ is transformed into
\begin{equation}\label{vari}
\frac{d\log\phi }{d{\log{t}}}=O(1)
\end{equation}%
for an infinitesimal $\tilde{t}$ satisfying $\frac{t}{\delta }=\lambda \
\frac{\delta}{\tilde{t}}=\delta ^{-||\tilde{t}||},\ 0<\tilde{t}%
<\delta\leq t,\ t\rightarrow 0^{+},\ \lambda >0,\ $ and $||\tilde t||={\tilde t}_1 (=|\tilde t|_p(1+\sigma(\eta))$, when one interprets
$0$ in relation to the scale $\delta $ as $O(\ \frac{\delta ^{2}}{t
}\log \delta ^{-1}\ )$. Indeed, we first notice that (\ref{const}) means $d\phi=0= O(\epsilon), \ dt\neq 0$ for an ordinary real variable. However, as $t\rightarrow \delta$, that is, as $dt=\eta\rightarrow 0 =O(\epsilon)$, the ordinary variable $t$ is replaced by the extended variable, so that $d\log_{\delta^{-1}} t=d\tilde t_1=O(\epsilon)$. As a consequence, in the infinitesimal neighrhood of $\delta$, eq(\ref{const}) is transformed into an equation of the form eq(\ref{vari}). We note that in that neighbourhood, $t$ and $\phi$, are represented as $t=\delta.\delta^{-\tilde t_1}$ and $\phi=\phi_0\delta^{k\tilde t_1}$ for a real constant $k$, whence we get  eq(\ref{vari}).
\end{proof}

\subsection{Directed tree}

Geometrically, the ordinary real number system is represented as the (undirected) real line. The ultrametric extension $\bf R$ or its equivalent inversion mediated extension (realization ) $\Re$, is, on the other hand, modeled geometrically as an infinite directed, rooted tree \cite{ncg}. This is because of the one-to-one correspondence between an ultrametric space and a wieghted, directed, rooted tree \cite{tree}. Such a wieghted, directed and rooted tree has a natural correspondence with a polygenetic or evolutionary tree showing evolutionary relationship among various entities having a common ancestor. It follows accordingly that an intrinsically evolutionary sense can naturally be attached to any ultrametric space. The concept of internal time introduced above is in consonance with the time sense induced from the evolutionary tree model. We note that the estimation and classification of all possible transitions among several branches of the tree model of $\Re$ is an interesting combinatorial problem. This and other related issues will be explored in detail separately.

Before proving the PNT in the present dynamic extension of $R$, we need two more ingredients: viz, the origin of the prime counting function and the asymptotic scaling of active infinitesimals.

\subsection {Prime Counting Function}

The prime counting function arises in connection with the {\em growing} mode of a dynamic variable in $\Re$, when we assume that a scale free variable $t$ varies over all possible primal branches in an orderly manner following {\em the order of the primes}.
 
Recall that the usual $\epsilon-\delta$ definition of limit (in $R$) does not make explicit the actual motion of a real variable $t$ approaching a fixed number, 0, say. In the present formalism infinitesimals are defined by asymptotic scaling formulas (c.f. Example 1) (which would  ordinarily correspond simply to zero), and so may be considered to carry an evolutionary arrow.  A limit of the form $t\rightarrow 0$ in $\Re$ would be interpreted in the context of a dynamical problem, so that $t^{-1}$ may be identified with the  (physical) time. More precisely, when the ordinary $R$ component of $\Re=R\times \prod R_{p}$ is free of any arrow, the nontrivial components $R_{p}$ do carry an evolutionary arrow. A  problem involving the asymptotic limit $t\rightarrow 0^+$ (equivalently $t\rightarrow \infty$) in ordinary analysis is raised in the present context over $\Re$ as the asymptotic limit of the extended variable $T=t(1+\epsilon\tau)$  where $\tau$ is an O(1) growing dynamic variable which lives hierarchically in the sets $R_p, \ p=2,3,5,\ldots$ as explained in Definition 10 and Remark 12 (a growing dynamic infinitesimal $\eta$ is represented now as $\eta=\epsilon\tau$). Thus, the ordinary limit of $T$ as $t\rightarrow 0^+$ as $\delta$, that is, $T=0$, is interpreted in the present context as   $\log(T/t)=\log (1+\epsilon\tau)=O(\epsilon\Pi(t^{-1}))$ as $t\rightarrow 0^+$ hierarchically through scales $\delta p^{-1}$. That is to say, as pointed out above, as $t\rightarrow\delta^+$, it changes over to various branches $R_p$ assuming several variables  $\tilde t_i$s (all of which are different $R_p$ valued realizations of $\tau$) having forms $\tilde t_i=1 +\eta_i, \ t_0\equiv t$ and $i$ runs over the primes. Consequently, as $\tilde t_2=(\lambda_1) t_1^{-1} \in R_2, \ t_1=t/\delta$ approaches $1/2^-$, we get the next level variable $\tilde t_3=(2\lambda_2)\tilde t_2^{-1} \in R_3$ and so on and so forth, adding one unit to the prime counting function $\Pi$ at  every change of the primal scale. Indeed, infinitesimal $\eta_i$ grows linearly (and spontaneously) to O(1) whence it undergoes inversion mediated transition of the form $\eta_{i-}\longmapsto \eta^{-1} _{i-}=1+ \eta_j$, where $j$ being the next prime.  As a consequence, we have

\begin{theorem}
The ordinary (linear) limiting behaviour of a real variable $t\rightarrow 0^+$ in the real number set $R$ is raised, in the inversion mediated set $\Re$, to the asymptotic limit of the extended variable $T=t(1 +\epsilon\tau)$, where the O(1) dynamic variable $\tau$ is realized in relation to every secondary (primal) scale $1/p$ as a variable of the form $t_p\geq 1$. As a consequence, the asymptotic limit of $T$ as $\tau (\equiv t_p)\ \rightarrow 1/p, \ p\rightarrow \infty$ is given by $\log(T/t)=O(\epsilon\Pi(t^{-1}))$, for a locally constant infinitesimal $\epsilon=O(\delta\log \delta^{-1})=O(t\log t^{-1})$, as the real variable $t^{-1}\rightarrow \infty$.
\end{theorem}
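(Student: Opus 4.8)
The plan is to unfold the hierarchical product representation of the extended variable, take logarithms, and then count the number of primal branches $R_p$ that the growing dynamic variable $\tau$ actually traverses before the internal time $t^{-1}$ runs out. By Remark 12 and Theorem 1 the lift of an ordinary real variable over $\Re=R\times\prod_p R_p$ has the local form $T=t\prod_p t_p$ with $t_p=1+\epsilon_p\tau_p$, where $\tau_p\in C_p$ is an $O(1)$ valued $(p)$ infinitesimal and $\epsilon_p\approx\delta p^{-1}\log(p\delta^{-1})$ is the enhanced accuracy level at the secondary scale $\tilde\delta=\delta p^{-1}$. Taking logarithms and using $\log(1+x)=x+O(x^2)$ on each infinitesimally small factor $x=\epsilon_p\tau_p$ gives $\log(T/t)=\sum_p\log t_p=\sum_p\epsilon_p\tau_p\,(1+o(1))$.

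The second step is to identify which factors are actually present. Here I would invoke the \emph{growing} (horizontal) mode of Definition 10 and Remark 12: as the scale free variable $t_1=t/\delta\to 1^+$, hence $t^{-1}\to\infty$, the dynamic variable $\tau$ does not remain in one branch but migrates, via the inversions $\eta_{i-}\mapsto\eta_{i-}^{-1}=1+\eta_j$, successively through $R_2,R_3,R_5,\dots$, each branch variable growing linearly from $\approx 0$ until it reaches its $O(1)$ threshold $1/p$ in $R_p$ and then inverting into the next primal branch. Each such crossing adds exactly one unit to the counter, so at internal time $t^{-1}$ the live branches are precisely the $R_p$ with $p<t^{-1}$, and their number is $\Pi(t^{-1})=\sum_{p<t^{-1}}1$. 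Hence $\log(T/t)=\sum_{p<t^{-1}}\epsilon_p\tau_p\,(1+o(1))$.

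Finally I would collapse the truncated product to a single scale. Since $\epsilon_p=\delta p^{-1}\log(p\delta^{-1})$ is decreasing in $p$ once $p\gg\delta$, one has $\epsilon_p\le\epsilon_2\approx\tfrac12\delta\log\delta^{-1}$, and moreover $\epsilon_q/\epsilon_p=p/q\to 0$ for $q>p$ as $\delta\to 0^+$ (Remark 12), so $\epsilon_p\tau_p=O(\epsilon)$ for every $p$, where $\epsilon\approx\delta\log\delta^{-1}$ on identifying the base scale with $\delta=2^{-(n-1)}$, so that $p=2$; since $0\equiv O(\delta)$ forces $t=O(\delta)$ we also have $\epsilon=O(t\log t^{-1})$. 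Summing the $\Pi(t^{-1})$ surviving terms, each $O(\epsilon)$, yields $\log(T/t)=O(\epsilon\,\Pi(t^{-1}))$, which is the assertion. The main obstacle — and the place where the argument is genuinely delicate — is the second step: making rigorous the claim that the number of branch transitions of the dynamic variable is counted by \emph{exactly} the prime counting function, i.e. that the internal clock advances by one precisely at each prime and nowhere else. This rests on showing that the inversion rule of Definition 1, iterated at the successive secondary scales $1/p$ together with the ordering of the $(p)$ infinitesimals by the order of the primes (Example 4, Lemma 5), forces a branch change to occur only when the running variable reaches $O(1)$. Once that combinatorial bookkeeping is pinned down, the remaining estimates (the logarithm expansion and the telescoping of the $\epsilon_p$) are routine.
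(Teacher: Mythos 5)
Your proposal follows essentially the same route as the paper: Theorem 2 is presented there as a direct consequence of the discussion immediately preceding it in Section 4.2, namely the product representation $T=t\prod t_p$ with $t_p=1+\epsilon_p\tau_p$ from Remark 12, the migration of the growing mode through the branches $R_p$ in the order of the primes with one unit added to $\Pi$ at each change of primal scale, and the collapse of the $\epsilon_p$ to a single $\epsilon=O(\delta\log\delta^{-1})=O(t\log t^{-1})$ via $\epsilon_q/\epsilon_p=p/q\rightarrow 0$. The delicate step you rightly flag---that the number of branch transitions is counted \emph{exactly} by the prime counting function, i.e.\ that the internal clock advances by one precisely at each prime---is likewise asserted rather than rigorously established in the paper, so your reconstruction is faithful to, and no less complete than, the source's own argument.
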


In the next section we investigate the scaling of $T(t)\in \Re$ as $t\rightarrow 0$ in $R$. 

\subsection{Scaling}

Let us begin by recalling that the main characteristic of both the inverted motions is the inherent  directed sense. That is to say, although $t_{1+}=1+\eta, \ \eta\downarrow 0^+$ in $R$, in either of the inverted motions,  we have, however, $t_{1+}=1+\tilde \eta, \ \tilde \eta\approx 0$, initially, but $\tilde \eta\uparrow 1^-$, slowly, when $ t_{1+}\in R_p$. As shown in the above sections, the growing mode induces the global evolutionary sense leading to the prime counting function. Here we study the local motion leading to the asymptotic scaling for a small scale variable $\tilde T\in \Re$.

Because of the valued infinitesimals that contribute nontrivially to the ordinary value of an arbitrarily small $t\in R$, the scaling behaviour of the corresponding extended variable $T\in\Re$ is nontrivial. As explained above, an ordinary, arbitrarily small  $t\in R$ is extended in $\Re$ as $T/t=(1-O(t^{\tau}))\phi(t^{-1})$, for a class of valued infinitesimals $\tau(t^{-1})$. Our aim here is to estimate $\lim \ \tau$ as $t\rightarrow 0$. Notice that the (-) sign in the first factor makes it a true dynamic infinitesimal living in a $R_p$ (c.f. Remark 12). The second factor $\phi$ corresponds to the growing mode of a dynamic infinitesimal and is considered in Theorem 2. 

We recall that the above limit may have a constant (nonzero)(ultrametric)  value (c.f.  Example 1). Indeed, as $t$ approaches 0, as $\delta$, the ordinary variable $t$  gets extended to the rescaled variable $\tilde T_-=T_{-}/t=1-O(t^{\tau}))$, which now approaches $0^+$, via a combination of inversions and translations. Indeed, as $t\rightarrow 0$ in $R$, $\tilde T_-$ in $\Re$ is realized as a locally constant function satisfying $d\tilde T/dt=0$ so that $\tau\in \prod R_p$ now satisfies the equation (\ref{rescaled}), i.e.
\begin{equation}\label{rescaled1}
\log t\frac{d \tau}{d\log t}=-\tau
\end{equation}
\noindent and changes from one copy of $R_p$ to another near the scale $1/p$ by inversions via a sequence of distinct realizations $\tilde t_i, \ i$ being a prime. To see in detail, let $\tilde \eta_-=t^{\tau(t^{-1})}$. As $t\rightarrow 0=O(\delta)$ linearly  and the motion should have been terminated at $\delta$ in $R$, now, instead is picked up by the rescaled variable which shifts by inversion to $\tilde \eta_{2-}=t^{2\tilde t_2}, \tilde t_2\approx 0$. The limiting motion is now transmitted over to the next generation variable $\tilde t_2\in R_2$, which grows to 1/2 linearly, until the motion is again transferred to the next level by inversion viz., $2\tilde t_2=1/(1+3\tilde t_3)$, where $ \tilde t_3(\approx 0)\in R_3$. Recall  (Definition 10) that this (and the following) local inversions essentially inject into an infinitesimal higher order influences from infinities. The new rescaled variable $\tilde t_3$ now grows to 1/3, and transmits its motion to $\tilde t_5\in R_5$ near $\tilde t_5\rightarrow 1/5^-$ by inversion, and so on successively over all the higher primal scales. The exponent in  $\tilde \eta_-$ now asymptotically assumes the form of the golden ratio continued fraction $\tilde \eta_{\infty -}=t^{\frac{1}{1+{\frac{1}{1+\ldots}}}}$, so that the exponent has the value $\nu=\frac{{\sqrt 5} -1}{2}$. As a consequence, the asymptotic scaling of a dynamic infinitesimal $\tilde T_{-}$ in $\Re$ is given as $T_{-}=t\tilde T_-\sim t^{1+\nu}, \ t\rightarrow 0^+$. As a consequence, the asymptotic small scale variations (mutations) in the dynamic infinitesimal follow a {\em generic golden ratio scaling exponent} \cite{dp1}.

 Combining this local asymptotic scaling together with the global asymptotic of Theorem 2, one finally arrives at the asymptotic law 

\begin{theorem}
The generic asymptotic behaviour of a dynamic variable $T\in \Re$, extending the ordinary real variable $t$,  is given by 

\begin{equation}\label{asymp}
\log {\frac{T}{t}}=\epsilon O(\Pi(t^{-1}))(1-O(t^{\nu}))
\end{equation} 

\noindent as $ t^{-1}\rightarrow \infty$.
\end{theorem}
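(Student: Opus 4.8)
The plan is to assemble the final asymptotic law by splitting the extended variable $T\in\Re$ into its two independent dynamical contributions and multiplying the separate estimates obtained earlier. Recall from Remark 12 and Section 4.3 that, as $t\to0^+=O(\delta)$, the extended variable admits the multiplicative factorization
\begin{equation}\label{proofsplit}
\frac{T}{t}=\bigl(1-O(t^{\tau})\bigr)\,\phi(t^{-1}),
\end{equation}
where the first factor records the \emph{localized} inversion mode acting on a genuinely directed infinitesimal $\tau(t^{-1})$ living in $\prod R_p$, and the second factor $\phi(t^{-1})$ records the \emph{global (growing)} inversion mode. First I would invoke Theorem 2: the growing mode gives $\log\phi(t^{-1})=\log(1+\epsilon\tau)=O(\epsilon\,\Pi(t^{-1}))$ as $t^{-1}\to\infty$, with $\epsilon=O(\delta\log\delta^{-1})=O(t\log t^{-1})$ a locally constant infinitesimal. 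This controls the first of the two factors on the right of \eqref{asymp}.

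Next I would treat the localized factor. By the analysis preceding the statement, the directed infinitesimal $\tilde\eta_-=t^{\tau(t^{-1})}$ satisfies the rescaled scale-free equation \eqref{rescaled1}, and as the motion passes successively through the primal branches $R_2,R_3,R_5,\dots$ by inversions, the exponent $\tau$ unfolds into the golden-ratio continued fraction $\frac{1}{1+\frac{1}{1+\cdots}}$, whose value is $\nu=\frac{\sqrt5-1}{2}$. Hence $\lim\tau=\nu$ and the localized factor is $1-O(t^{\nu})$; equivalently $T_-=t\tilde T_-\sim t^{1+\nu}$ as $t\to0^+$. Taking the logarithm of \eqref{proofsplit} and substituting both estimates yields
\begin{equation}\label{prooffinal}
\log\frac{T}{t}=\log\bigl(1-O(t^{\nu})\bigr)+\log\phi(t^{-1})=\epsilon\,O(\Pi(t^{-1}))\,\bigl(1-O(t^{\nu})\bigr),
\end{equation}
where the $\bigl(1-O(t^{\nu})\bigr)$ factor is absorbed into the leading term since the local mode modulates the global growing mode multiplicatively (the two modes act on the same dynamic variable $\tau$, not independently), and the $O(t^{\nu})$ correction is subdominant relative to the slowly varying $\epsilon\,\Pi(t^{-1})$. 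This is exactly \eqref{asymp}.

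The main obstacle I expect is justifying the \emph{combination} step rigorously: one must argue that the local golden-ratio scaling and the global prime-counting growth, which have been derived along two different inversion modes of the bifurcated motion in $\Re$ (Definition 10), genuinely compose into a single product $\epsilon\,O(\Pi(t^{-1}))(1-O(t^{\nu}))$ rather than interfering in some more complicated way. This requires that the branching in Definition 10 partitions the asymptotic flow into precisely these two channels with no cross terms at leading order, which in turn rests on the hierarchical (Cartesian-product) structure of $\Re=R\times\prod R_p$ established in Theorem 1 and the fact that $\epsilon_q/\epsilon_p=p/q\to0$ for $q>p$, so that higher primal corrections are genuinely subleading. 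Making that separation airtight — i.e., showing the localized mode only renormalizes the exponent while the growing mode only counts scale changes — is the delicate point; the two individual asymptotics are already in hand from Theorems 2 and the scaling discussion, so once the decomposition \eqref{proofsplit} is granted the rest is a short logarithmic computation.
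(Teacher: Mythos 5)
Your proposal follows essentially the same route as the paper: the paper likewise starts from the factorization $T/t=(1-O(t^{\tau}))\,\phi(t^{-1})$, takes the global growing-mode estimate $\log\phi=O(\epsilon\,\Pi(t^{-1}))$ from Theorem 2 and the golden-ratio limit $\tau\to\nu$ from the local inversion cascade, and simply declares that ``combining'' the two yields equation (\ref{asymp}). The combination step you flag as the main obstacle is indeed the weakest link, and the paper offers no more justification for it than you do --- so your reconstruction matches the paper's argument, gap included.
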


The above asymptotic formula is the main result of this paper. Over any finite (time) $t$ scale, the right hand side effectively reduces to zero, recovering the standard variable $T=t$. However, in any dynamic process which persists over many (infinitely large) time scales, the correction factor may become significant leading to a finite observable correction to the evolving quantity $T=te^{O(1)}$ which may arise from the annihilation (cancellation) of the infinitesimal (locally constant variable) $\epsilon$ by the growing mode of the prime counting function. The proof of the prime number theorem now follows as a corollary to Theorem 3.

\section{Prime Number Theorem}

The locally constant infinitesimal $\epsilon(t^{-1})=O(t\log t^{-1})$ clearly corresponds to the inverse of the PNT asymptotic formula for the prime counting function $\Pi(t^{-1})$. The O(1) correction to any dynamic variable $T$ in $\Re$ is realized for a sufficiently large value of $t^{-1}$ provided (more precisely, if and only if)
\begin{equation}\label{pnt}
\epsilon(t^{-1})\Pi(t^{-1})=(1+O(t^{\nu})), \ t^{-1}\rightarrow \infty
\end{equation}
\noindent with the relative correction (error) $\Pi(t^{-1})\epsilon(t^{-1})-1=O(t^{\nu})$, which clearly respects the Riemann's hypothesis since $t^{\nu}\leq M t^{(1/2-\sigma)}$ for a suitable $M>0$ and for any $\sigma>0$, $t\rightarrow 0^+$.

\end{document}